\newcommand{\Real}{\mathbb{R}}
\newcommand{\Complex}{\mathbb{C}}
\newcommand{\Integer}{\mathbb{Z}}
\newcommand{\SmallHalf}{{\scriptscriptstyle \frac{1}{2}}}
\DeclareMathOperator{\Div}{div}
\DeclareMathOperator{\Cond}{cond}
\DeclareMathOperator{\Diag}{diag}
\DeclarePairedDelimiter{\RoundBrackets}{(}{)}
\DeclarePairedDelimiter{\CurlyBrackets}{\{}{\}}
\DeclarePairedDelimiter{\SquareBrackets}{[}{]}
\DeclarePairedDelimiter\floor{\lfloor}{\rfloor}
\newtheorem{theorem}{Theorem}[section]
\newtheorem{lemma}[theorem]{Lemma}
\newtheorem{definition}[theorem]{Definition}
\newtheorem{proposition}[theorem]{Proposition}
\title{A fast cosine transformation accelerated method for predicting effective thermal conductivity}
\author{Changqing Ye\thanks{Department of Mathematics, The Chinese University of Hong Kong, Shatin, Hong Kong SAR. cqye@math.cuhk.edu.hk},\quad
Shubin Fu\thanks{Eastern Institute for Advanced Study, Ningbo, China}\quad and
Eric T.~Chung\thanks{Department of Mathematics, The Chinese University of Hong Kong, Shatin, Hong Kong SAR}
}
\begin{document}
\maketitle
\begin{abstract}
  Predicting effective thermal conductivity by solving a Partial Differential Equation (PDE) defined on a high-resolution Representative Volume Element (RVE) is a computationally intensive task.
In this paper, we tackle the task by proposing an efficient and implementation-friendly computational method that can fully leverage the computing power offered by hardware accelerators, namely, graphical processing units (GPUs).
We first employ the Two-Point Flux-Approximation scheme to discretize the PDE and then utilize the preconditioned conjugate gradient method to solve the resulting algebraic linear system.
The construction of the preconditioner originates from FFT-based homogenization methods, and an engineered linear programming technique is utilized to determine the homogeneous reference parameters.
The fundamental observation presented in this paper is that the preconditioner system can be effectively solved using multiple Fast Cosine Transformations (FCT) and parallel tridiagonal matrix solvers.
Regarding the fact that default multiple FCTs are unavailable on the CUDA platform, we detail how to derive FCTs from FFTs with nearly optimal memory usage.
Numerical experiments including the stability comparison with standard preconditioners are conducted for 3D RVEs.
Our performance reports indicate that the proposed method can achieve a $5$-fold acceleration on the GPU platform over the pure CPU platform and solve the problems with $512^3$ degrees of freedom and reasonable contrast ratios in less than $30$ seconds.

\end{abstract}

\section{Introduction}
Composites are ubiquitous nowadays.
When discussions are confined to in-silicon experiments, the inherent small-scale heterogeneity in composite materials renders direct numerical simulations generally impractical, despite the capabilities of modern computing power.
To address this challenge, the widely accepted methodology named upscaling is employed, which aims to establish large-scale homogeneous models by leveraging small-scale heterogeneity.
One example of upscaling is Effective Thermal Conductivity (ETC), which characterizes the overall heat transfer behavior of a heterogeneous material.
Understanding and accurately predicting ETC can enable the design and optimization of advanced materials with desired thermal properties.
For example, in industries such as \replace{aerospace, automotive, and electronics}{aviation, car manufacturing, or electronics}, efficient thermal management is crucial for preventing overheating and ensuring the reliability and performance of components and systems \cite{Moore2014,Puneet2015,Gogoi2021}.

There are two \replace{mainstreams}{ways} of predicting ETC: analytical methods and numerical methods.
Analytical methods pursue analytical expressions for \add{the} effective conductivity using a small set of parameters \cite{Pietrak2014}, where the fundamental idea could be credited to Maxwell \cite{Maxwell2010} and subsequent improvements \replace{could}{can} be found in \cite{Rayleigh1892,Landauer1952,Hasselman1987}.
Numerical methods are built upon homogenization theories \cite{Bensoussan2011,Jikov1994}, which involves solving Partial Differential Equations (PDE) defined on Representative Volume Elements (RVE) that capture the microstructures of composites.
Certainly, analytical methods are extremely efficient due to the explicit utilization of closed expressions, while their accuracy may be questionable in certain cases \cite{Pietrak2014}.
Numerical methods, on the contrary, are computationally intensive but provide more reliable results. This paper focuses on exploring numerical methods for predicting ETC.
it is worth mentioning that calculating ETC based on a single RVE may not be sufficient to capture the heat transfer behavior of a composite material, especially when the material exhibits three scales and strong stochasticity.
Recently, a novel micro-meso-macro multi-scale model was developed by Yang and Guan et al.\ in \cite{Yang2019}, which enables the analysis of transfer behavior in heterogeneous materials with multiple random configurations, and more general applications in engineering practice could be found in \cite{Guan2015,Dong2022}.

\add{The} microstructure within an RVE can be highly complicated, making the generation of body-fitted meshes both time-consuming and prone to failure.
Moreover, the primary information of microstructures is commonly provided by modern digital volume imaging techniques \cite{Larson2002,Poulsen2004,Landis2010}, which leads to the loss of precise geometric descriptions of composites.
Consequently, we always consider pixel/voxel-based representations of RVEs, which are equivalent to multidimensional array data structures in implementations.
While there are certain limitations associated with pixel/voxel-based representations \cite{Segurado2018}, the computational advantages they offer, such as efficient memory access, fast algorithms, and easy implementations, are \remove{much} desirable.

One notable example of harnessing pixel/voxel-based representations of RVEs with fast algorithms is a class of computational homogenization methods based on Fast Fourier Transformations (FFT).
In the realm of micromechanics, periodic Boundary Conditions (BC) are preferred compared to other BCs, due to their ability to generate less biased results in random composites  \cite{Terada2000,Kanit2003}.
Pioneered by Moulinec and Suquet \cite{Moulinec1995, Moulinec1998}, FFT-based computational homogenization has garnered significant attention within the community.
Moulinec and Suquet's scheme \cite{Moulinec1998} commences by introducing a homogeneous reference medium and then transforms the force-balance equation into an integral equation known as the Lippmann-Schwinger equation.
By utilizing Fourier transformations, the convolution operator within the integral equation is converted into a trivial entry-wise product.
Then, fixed-point iterations are applied to obtain the final converged stress solution.
Recall that finite element analysis encompasses two distinct phases: the discretization phase, where suitable finite element function spaces are engineered to discretize the variational form, and the solver phase, where the resulting algebraic linear system is solved.
However, in Moulinec and Suquet's scheme, the cutting line between discretization and solver is opaque.
Taking this into account, subsequent advancements in FFT-based homogenization can be categorized into two groups based on the aspects they address.
Regarding discretization, we here list several remarkable works, such as Willot's scheme \cite{Willot2015}, staggered grid \cite{Schneider2016}, finite volume \cite{Wiegmann2006,Dorn2019} and finite element \cite{Schneider2017}.
In terms of solver techniques, the original fixed-point formulation can be further accelerated using sophisticated techniques such as polarization \cite{Eyre1999, Vinogradov2008} and conjugate gradients \cite{Zeman2010}.
It is worth emphasizing that those references mentioned above are not exhaustive, and interested readers can refer to recent comprehensive review papers \cite{Schneider2021,Lucarini2021} for historical developments and state-of-the-art results in this area.

However, for predicting ETC, imposing periodic BCs on a RVE is not a default option.
Instead, a popular choice is the Dirichlet-Neumann mixed BC \cite{Chen2016, Liu2019,Ngo2019,Ngo2019a}, which is also widely adopted in reservoir simulation for porous medium upscaling \cite{Durlofsky1991}.
The reason may be attributed that this type of BCs aligns closely \add{with} the laboratory setting to obtain ETC or permeability from a real RVE sample.
In this context, we shelve the discussion on the effectiveness and impact of different BCs \cite{Yue2007} and focus on the computational challenges posed by 3D RVEs with a large number of Degrees of Freedom (DoF).

Another notable example of directly working with pixel/voxel-based representations of RVEs is from Lattice Boltzmann Methods (LBM).
LBMs were originally proposed for simulating complex fluid dynamics \cite{Higuera1989, McNamara1988, Aidun2010}, and their application for predicting ETC was initiated by Wang et al.~in \cite{Wang2007}.
One of the main advantages of LBMs is their easy implementation, as there is no need for constructing sparse matrices.
LBMs have also been further developed and applied to various composite materials \cite{Wang2007a, Wang2008, Hussain2020}.
However, LBMs are primarily designed for dynamical processes, whereas predicting effective conductivity essentially requires a stable system.
Therefore, in LBM implementation, a cumbersome convergence criterion is employed to ensure that the numerical solutions have reached a stable state.
Additionally, the convergence rate of LBMs is significantly influenced by the values of certain parameters, as shown in \cite{Yang2022} that fully tuned parameter values can lead to a huge acceleration in convergence.

Because FFTs are tightly bound with periodic BCs, the direct implementation of FFT-based homogenization to obtain ETC with the Dirichlet-Neumann mixed BC imposed is infeasible.
Meanwhile, without periodic BCs, the derivation of \add{the} Lippmann-Schwinger equation in \cite{Moulinec1998} is less straightforward.
However, the ingenious utilization of FFTs remains invaluable, as FFTs are highly modularized functions that can be readily optimized by hardware vendors.
The work by Schneider, Merker, and Kabel in \cite{Schneider2017} provides an alternative perspective, suggesting that FFTs can be directly applied to displacement fields discretized by Q1 elements.
Therefore, the methodology of FFT-based homogenization presented in \cite{Schneider2017} is akin to preconditioners for algebraic linear systems \cite{Saad2003}.
Recently, the idea of employing FFTs as preconditioners has been further developed in \cite{Ladecky2023}.
As highlighted in \cite{Yang2022}, LBMs offer an advantage over nodal finite element methods in terms of deriving ETC, as they are capable of resolving heat flux across the boundary.
Hence, it is crucial to carefully select a suitable discretization scheme that naturally facilitates the calculation of ETC.

In this paper, we begin by introducing the Two-Point Flux-Approximation (TPFA) scheme for discretizing the PDE with the Dirichlet-Neumann mixed BC.
TPFA is widely recognized as the default choice for discretizing flow equations in reservoir simulation \cite{Nardean2022}.
With TPFA, the flow flux across element interfaces can be easily reconstructed, aligning well with the requirement for calculating ETC.
It is worth noting that in \cite{Wiegmann2006,Dorn2019}, the explicit jump discretization method bears similarities to TPFA, although the underlying mathematical derivations differ.
Moreover, based on the derivation presented in the paper, it becomes apparent that TPFA can naturally accommodate orthotropic coefficient profiles and the inhomogeneous Dirichlet BC.
Subsequently, the resulting algebraic linear system is solved by the Preconditioned Conjugate Gradient (PCG) method, where the essential part is the construction of preconditioners.
Inspired by FFT-based homogenization, we introduce homogeneous reference parameters to establish the preconditioner system.
To enhance the performance of the PCG on strongly anisotropic models, we design a linear programming algorithm to optimize the values of reference parameters.
The fundamental observation in our paper is that the preconditioner system could be solved efficiently by leveraging multiple Fast Cosine Transformations (FCT) and parallel tridiagonal matrix solvers.
A theoretical by-product of this observation is that we prove the lower and upper bounds of the condition number of the original algebraic linear system.
We detail the implementations, especially on the CUDA platform, where the out-of-the-box multiple FCTs are unavailable.
Finally, we present numerical experiments that include stability comparisons with standard preconditioners, performance acceleration through GPU utilization, and the impact of lower precision floating formats on homogenization.
\add{The principal contribution of this paper lies in the introduction of the discretization scheme and the construction of the fast solver for predicting ETC.}

We notice that a recent work by Grimm-Strele and Kabel explores the application of FFT-homogenization on nonperiodic BCs \cite{GrimmStrele2021}.
The main distinction between our paper and \cite{GrimmStrele2021} lies in our adoption of the preconditioner framework, which may be more accessible to those who are not familiar with the techniques developed within the FFT-homogenization community.
Another benefit offered by the preconditioner framework, compared to \cite{GrimmStrele2021}, is directly invoking sparse matrix routines provided by hardware vendors, such as Intel's MKL and Nvidia's cuSPARSE, which are typically optimized to target specific hardware architectures.
Moreover, our method is more focused on the BCs for calculating ETC, thereby eliminating the reliance on fast sine transformations as seen in \cite{GrimmStrele2021}.
The utilization of TPFA is rooted in the thermal conductivity setting, and the extension of the derivation presented in the paper to elasticity and full-tensor conductivity is highly nontrivial \cite{Ingram2010,Ambartsumyan2020,Ambartsumyan2020a}.
Based on the boundary integral equation in the periodic setting, a robust FFT-based scheme for porous media is established in \cite{To2020}.
Later, this framework is extended to nonperiodic BCs in \cite{To2021}.
However, to apply the scheme from \cite{To2021} for predicting ETC, a fictitious void space needs to be introduced around the RVE, which will introduce redundant DoFs and complicate the implementation.
A recent study conducted by Zhou and Bhattacharya in \cite{Zhou2021} investigates the utilization of GPUs for accelerating computational micromechanics.
Their settings focus on the conventional periodic BCs, and hence the FCT kernel highlighted in the paper is unnecessary in \cite{Zhou2021}.
\add{There has been a surge of interest in solving nonperiodic boundary value problems in FFT-based homogenization, as evidenced by several recent publications \cite{Monchiet2024, Risthaus2024, Morin2024, Gelebart2024}. In this context, our contribution provides a complementary perspective to the existing literature by emphasizing the preconditioner framework.}

The rest of this paper is organized as follows.
In \cref{sec:preliminaries}, we introduce the PDE model and elaborate on the derivation of the TPFA scheme to the problem.
\Cref{sec:methods} constitutes the core of this paper, where we present the PCG method along with the strategy for determining reference parameters, describe the solution algorithm for solving the preconditioner system with a theoretical by-product, and detail the implementation techniques for multiple FCTs.
Numerical experiments conducted on 3D RVEs with DoF up to $512^3$ are reported in \cref{sec:experiments}. Finally, \cref{sec:conclusions} serves as concluding remarks.

\section{Preliminaries}\label{sec:preliminaries}
\subsection{Model problem}
We consider a three-dimensional RVE denoted by $\Omega$ with dimensions $L^x\times L^y \times L^z$. Our assumptions are as follows: (1) no heat is generated within the medium; (2) thermal flux is continuous across interfaces of different material phases; (3) the system has reached a stable state; (4) a linear constitutive law applies at each material point. Consequently, the governing equation is expressed as:

\begin{equation}\label{eq:governing eq}
  -\Div\RoundBrackets*{\mathbb{K}\nabla p}=0\ \text{in} \ \Omega,
\end{equation}
where $\mathbb{K}(\bm{x})$ is a positive definite matrix at $\bm{x}$. In this paper, we make a further assumption that $\mathbb{K}(\bm{x})$ is \replace{element-wisely}{element-wise} orthotropic, i.e.,
\[
  \mathbb{K}(\bm{x})=\Diag\Big(\kappa^x(\bm{x}),\ \kappa^y(\bm{x}),\ \kappa^z(\bm{x})\Big),
\]
which is essential for our discretization scheme. The PDE in \ref{eq:governing eq} can be solved by applying appropriate BCs. Unlike solid mechanics, where periodic BCs are often preferred and may offer additional benefits over other BCs \cite{Kanit2003}, we introduce a Dirichlet-Neumann mixed BC that aligns with the laboratory setting for determining conductivity parameters \cite{Ngo2019}. Specifically, we first assume that
\[
  p\equiv p_\mathup{in}\ \text{on}\ \Gamma_\mathup{in}\ \text{and}\ p\equiv p_\mathup{out}\ \text{on}\ \Gamma_\mathup{out},
\]
where $\Gamma_\mathup{in}$ and $\Gamma_\mathup{out}$ represent a pair of opposing faces, and $p_\mathup{in}$ and $p_\mathup{out}$ are predetermined constants. Additionally, for the \replace{rest}{remaining} faces denoted by $\Gamma_\mathup{N}$, we enforce the homogeneous Neumann BC. \Cref{fig:RVE} demonstrates a RVE and boundary parts $\Gamma_\mathup{in}$, $\Gamma_\mathup{out}$ and $\Gamma_\mathup{N}$. With those BCs furnished, we can solve for a unique $p$, \add{providing that there exist positive constants $\beta'$ and $\beta''$ such that}
\[
  \beta' \leq \kappa^x(\bm{x}),\ \kappa^y(\bm{x}),\ \kappa^z(\bm{x}) \leq \beta''
\]
for a.e.~$\bm{x}$ in $\Omega$.
For simplicity, we always assume that $\Gamma_\mathup{in}$ and $\Gamma_\mathup{out}$ are aligned in the $z$-direction. Consequently, we can determine ETC along this direction as
\begin{equation} \label{eq:effective formula}
  \kappa^z_\mathup{eff} \coloneqq  \frac{L^z\int_{\Gamma_\mathup{out}}\bm{n}\cdot \mathbb{K}\nabla p \di A}{L^xL^y\RoundBrackets*{p_\mathup{in}-p_\mathup{out}}},
\end{equation}
where \alert{$\bm{n}$} is a unit vector pointing outward from the boundary.
\add{Note that we require that $p_\mathup{in} \neq p_\mathup{out}$ here, otherwise the effective conductivity is not well-defined and the solution of $p$ would be a constant function.}
Similarly, $\kappa_\mathup{eff}^x$ and $\kappa_\mathup{eff}^y$ can be determined by altering the direction of the Dirichlet BCs, while there have been several discussions of deriving ETC in a full-matrix form \cite{Yue2007}. However, in this paper, we focus exclusively on $\kappa_\mathup{eff}^z$ to simplify notations.
It is important to note that the flux term on the boundary, $\bm{n} \cdot \mathbb{K}\nabla p$, needs to be carefully handled in the discretization scheme. Furthermore, the relation $\int_{\Gamma_\mathup{out}} \bm{n}\cdot \mathbb{K}\nabla p \di A + \int_{\Gamma_\mathup{in}} \bm{n}\cdot \mathbb{K}\nabla p \di A = 0$ holds true based on the problem setting.

\begin{figure}[!ht]
  \centering
  \def\NameWidth{0.32}
  \begin{tikzpicture}
    \draw (0, 0) rectangle (4, 4);

    \foreach \x in {0,...,10}{
        \draw[stealth-] (0.4 * \x, 4) -- ++(0, 0.3);
        \draw[-stealth] (0.4 * \x, 0) -- ++(0, -0.2);
      }

    \node[above] at (2, 4.4) {$\Gamma_\mathup{in}$};
    \node[below] at (2, -0.3) {$\Gamma_\mathup{out}$};

    \node[left] at (0, 2) {$\Gamma_\mathup{N}$};
    \node[right] at (4, 2) {$\Gamma_\mathup{N}$};

    \draw[gray] (0, 0) -- ++({-2*\NameWidth}, 0);
    \draw[gray] (0, 4) -- ++({-2*\NameWidth}, 0);
    \draw[gray, stealth-] (-\NameWidth, 0) -- ++(0, {2-\NameWidth});
    \draw[gray, stealth-] (-\NameWidth, 4) -- ++(0, {\NameWidth-2});
    \draw[gray] (4, 0) -- ++({2*\NameWidth}, 0);
    \draw[gray] (4, 4) -- ++({2*\NameWidth}, 0);
    \draw[gray, stealth-] ({4+\NameWidth}, 0) -- ++(0, {2-\NameWidth});
    \draw[gray, stealth-] ({4+\NameWidth}, 4) -- ++(0, {\NameWidth-2});

    \fill[color=gray] (1, 1) circle (0.5);
    \fill[color=gray] (1, 2) circle (0.3);
    \fill[color=gray] (2, 3) circle (0.4);
    \fill[color=gray] (2, 2) circle (0.3);
    \fill[color=gray] (3, 1.2) circle (0.4);
    \fill[color=gray] (3.5, 3) circle (0.3);
    \fill[color=gray] (2.6, 2.4) circle (0.2);
    \fill[color=gray] (0.5, 3) circle (0.25);
    \fill[color=gray] (2.2, 0.2) circle (0.18);
    \fill[color=gray] (3.7, 0.3) circle (0.28);
    \fill[color=gray] (2, 1) circle (0.4);
    \fill[color=gray] (1.2, 3.3) circle (0.4);

    \draw[gray,-stealth] (-2, 2) -- ++(0.8, 0) node[above] {$x$};
    \draw[gray,-stealth] (-2, 2) -- ++(0, -0.8) node[left] {$z$};
    \node[black] at (-2, 2) {$\odot$};

  \end{tikzpicture}
  \caption{An illustration of a RVE and boundary parts $\Gamma_\mathup{in}$, $\Gamma_\mathup{out}$ and $\Gamma_\mathup{N}$.} \label{fig:RVE}
\end{figure}
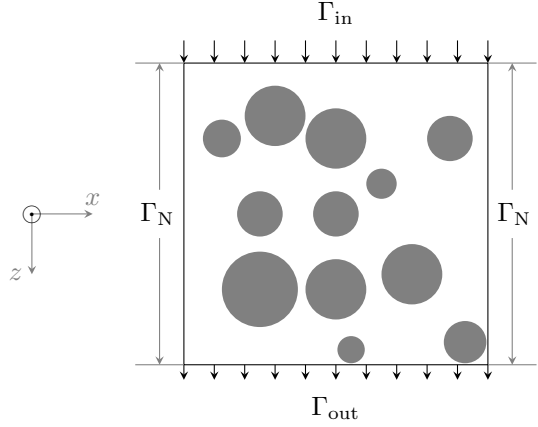

\subsection{Discretization scheme}
We assume that a structured mesh exists on the domain $\Omega$, with a resolution of $N^x \times N^y \times N^z$.
Each element, also known as a voxel, has physical dimensions of $h^x \times h^y \times h^z$, where $h^{\diamond} = L^{\diamond} / N^{\diamond}$ for $\diamond \in {x, y, z}$.
Therefore, the prior knowledge of the RVE could be represented as arrays $\kappa^\diamond_{i,j,k}$ with $\diamond \in \CurlyBrackets*{x, y, z}$ and \alert{$(i, j, k)\in \mathcal{I}_h\coloneqq \CurlyBrackets{(i^*,j^*,k^*)\in \Integer^3:0\leq i^*< N^x,\ 0\leq j^* < N^y,\ 0\leq k^*< N^z}$}.
As a convention, we denote by $\mathcal{T}_h$ the set of all volume elements, $\mathcal{F}_h$ the set of all faces, $\mathcal{F}_h^\circ$ the subset of $\mathcal{F}_h$ by excluding faces on $\Gamma_\mathup{N}$.
The volume element indexed by $(i, j, k) \in \mathcal{I}_h$ can be denoted as $T_{i,j,k} \in \mathcal{T}_h$.
Similarly, any face $F \in \mathcal{F}_h$ can be represented as $F_{i\pm \SmallHalf,j,k}$, $F_{i,j\pm \SmallHalf,k}$, or $F_{i,j,k\pm \SmallHalf}$.
Consequently, we can treat that $\mathbb{K}(\bm{x})$ within the element $T_{i,j,k}$, remains constant and is equal to $\Diag\RoundBrackets*{\kappa^x_{i,j,k},\ \kappa^y_{i,j,k},\ \kappa^z_{i,j,k}}$.

We introduce the flux field $\bm{v}=-\mathbb{K}\nabla p$ to rewrite \cref{eq:governing eq} as a system of equations
\begin{equation}\label{eq:mixed system}
  \left\{
  \begin{aligned}
     & \mathbb{K}^{-1} \bm{v}+\nabla p = 0, \\
     & \Div \bm{v} = 0.
  \end{aligned}
  \right.
\end{equation}
The admissible space for  the flux field is denoted as $H_{0,\Gamma_\mathup{N}}(\Div, \Omega) \coloneqq \{\bm{w}\in L^2(\Omega; \Real^d): \Div \bm{w} \in L^2(\Omega),\  \bm{n}\cdot \bm{w}=0\ \text{on}\ \Gamma_\mathup{N}\}$, while the admissible space for \add{the} pressure is simply $L^2(\Omega)$. By applying test functions to \cref{eq:mixed system}, we obtain the following variational formulation: find $(\bm{v}, p) \in H_{0,\Gamma_\mathup{N}}(\Div, \Omega) \times L^2(\Omega)$, s.t.

\begin{align}
   & \int_{\Omega} \mathbb{K}^{-1} \bm{v} \cdot \bm{w} \di V - \int_\Omega p \Div \bm{w} \di V = -\int_{\Gamma_\mathup{out}} p_\mathup{out} \bm{n} \cdot \bm{w} \di A - \int_{\Gamma_\mathup{in}} p_\mathup{in} \bm{n} \cdot \bm{w} \di A, \nonumber \\
   & \qquad\qquad \forall \bm{w} \in H_{0,\Gamma_\mathup{N}}(\Div, \Omega),                    \label{eq:mixed variational form p1}                                                                                                                  \\
   & -\int_{\Omega} \Div \bm{v} q \di V = 0,\ \forall q \in L^2(\Omega). \label{eq:mixed variational form p2}
\end{align}

%
Note that in \cref{eq:mixed variational form p1}, the Dirichlet boundary terms $p_\mathup{out}$ and $p_\mathup{in}$ appear on the right-hand side, which differs from the commonly used energy minimization formulation.

The calculation of $\kappa_\mathup{eff}^z$ as per \cref{eq:effective formula} involves integrating the flux along the face. Therefore, a direct discretization approach based on the mixed formulation given by \cref{eq:mixed variational form p1,eq:mixed variational form p2} has the potential to offer improved accuracy. Generally, the well-known LBB condition should be satisfied to ensure the stability of finite element spaces of $H_{0,\Gamma_\mathup{N}}(\Div, \Omega) \times L^2(\Omega)$ (ref.~\cite{Boffi2013}). The simplest option is the so-called RT\textsubscript{$0$} spaces, denoted by $\bm{V}_h\times W_h$, where $\bm{V}_h$ is a finite subspace of $H_{0,\Gamma_\mathup{N}}(\Div, \Omega)$ defined \replace{element-wisely}{element-wise} as $(a^xx+b^x, a^yy+b^y, a^zz+b^z)$, and $W_h$ represents the space of \replace{piece-wisely}{piecewise} constant functions on the structural mesh. \Cref{fig:dof} serves as an illustration of bases in $\bm{V}_h$ and $W_h$. To elaborate,  the discretization of $p$ in \cref{eq:mixed variational form p1}, denoted by $p_h$, could simply be represented as $p_{i,j,k}$; the discretization of $\bm{v}$ in \cref{eq:mixed variational form p1,eq:mixed variational form p2}, denoted by $\bm{v}_h$ could be expressed as three 3D arrays $v_{i\pm\SmallHalf,j,k}$, $v_{i,j\pm\SmallHalf,k}$ and $v_{i,j,k\pm\SmallHalf}$. Additionally, the homogeneous BC on $\Gamma_\mathup{N}$ implies that $v_{-\SmallHalf, j, k}=v_{N^x-\SmallHalf, j, k}=v_{i, -\SmallHalf, k}=v_{i, N^y-\SmallHalf,k}=0$ for all $(i,j,k)\in \mathcal{I}_h$. Finally, the value of $\kappa_\mathup{eff}^z$ could be calculated by using the following expression:
\begin{equation}\label{eq:effective formula dis}
  \kappa_\mathup{eff}^z \approx \frac{L^z \sum_{i=0}^{N^x-1}\sum_{j=0}^{N^y-1} v_{i,j,N^z-\SmallHalf}}{N^xN^y(p_\mathup{in}-p_\mathup{out})}.
\end{equation}

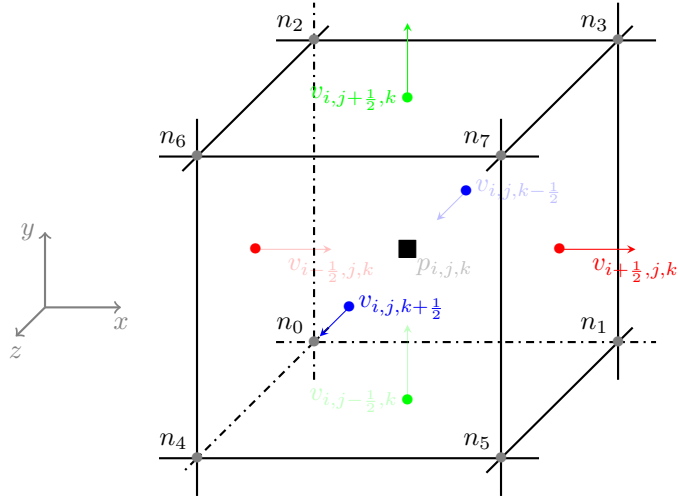
\begin{figure}[!ht]
  \def\OverLen{1.0}
  \def\LENGTH{4.0}
  \def\Opacity{0.25}
  \centering
  \begin{tikzpicture}
    \coordinate (CoodOrigin) at ({-\LENGTH / 2}, {\LENGTH / 2}, \LENGTH);
    \draw[->,gray,thick] (CoodOrigin) -- ++(\OverLen, 0, 0) node[below] {$x$};
    \draw[->,gray,thick] (CoodOrigin) -- ++(0, \OverLen, 0) node[left] {$y$};
    \draw[->,gray,thick] (CoodOrigin) -- ++(0, 0, \OverLen) node[below] {$z$};

    \coordinate (izjzkz) at (0, 0, 0);
    \coordinate (iojzkz) at (\LENGTH, 0, 0);
    \coordinate (izjokz) at (0, \LENGTH, 0);
    \coordinate (iojokz) at (\LENGTH, \LENGTH, 0);
    \coordinate (izjzko) at (0, 0, \LENGTH);
    \coordinate (iojzko) at (\LENGTH, 0, \LENGTH);
    \coordinate (izjoko) at (0, \LENGTH, \LENGTH);
    \coordinate (iojoko) at (\LENGTH, \LENGTH, \LENGTH);


    \draw[thick,dash dot] ({-0.5*\OverLen}, 0, 0) -- ++({\LENGTH+\OverLen}, 0, 0);
    \draw[thick] ({-0.5*\OverLen}, \LENGTH, 0) -- ++({\LENGTH+\OverLen}, 0, 0);
    \draw[thick,dash dot] (0, {-0.5*\OverLen}, 0) -- ++(0, {{\LENGTH+\OverLen}}, 0);
    \draw[thick] (\LENGTH, {-0.5*\OverLen}, 0) -- ++(0, {{\LENGTH+\OverLen}}, 0);
    \draw[thick] ({-0.5*\OverLen}, 0, \LENGTH) -- ++({\LENGTH+\OverLen}, 0, 0);
    \draw[thick] ({-0.5*\OverLen}, \LENGTH, \LENGTH) -- ++({\LENGTH+\OverLen}, 0, 0);
    \draw[thick] (0, {-0.5*\OverLen}, \LENGTH) -- ++(0, {{\LENGTH+\OverLen}}, 0);
    \draw[thick] (\LENGTH, {-0.5*\OverLen}, \LENGTH) -- ++(0, {{\LENGTH+\OverLen}}, 0);
    \draw[thick] (\LENGTH, 0, {-0.5*\OverLen}) -- ++(0, 0, {\LENGTH+\OverLen});
    \draw[thick] (\LENGTH, \LENGTH, {-0.5*\OverLen}) -- ++(0, 0, {\LENGTH+\OverLen});
    \draw[thick] (0, \LENGTH, {-0.5*\OverLen}) -- ++(0, 0, {\LENGTH+\OverLen});
    \draw[thick,dash dot] (0, 0, {-0.5*\OverLen}) -- ++(0, 0, {\LENGTH+\OverLen});

    \node[red] at (0, {0.5*\LENGTH}, {0.5*\LENGTH}) {$\bullet$};
    \draw[red,-stealth, opacity=\Opacity] (0, {0.5*\LENGTH}, {0.5*\LENGTH}) -- ++(\OverLen, 0, 0) node[below]  {$v_{i-\SmallHalf,j,k}$};
    \node[red] at (\LENGTH, {0.5*\LENGTH}, {0.5*\LENGTH}) {$\bullet$};
    \draw[red,-stealth] (\LENGTH, {0.5*\LENGTH}, {0.5*\LENGTH}) -- ++(\OverLen, 0, 0) node[below]  {$v_{i+\SmallHalf,j,k}$};

    \node[green] at ({0.5*\LENGTH}, 0, {0.5*\LENGTH}) {$\bullet$};
    \node[green, left, opacity=\Opacity] at ({0.5*\LENGTH}, 0, {0.5*\LENGTH}) {$v_{i, j-\SmallHalf, k}$};
    \draw[green, -stealth, opacity=\Opacity] ({0.5*\LENGTH}, 0, {0.5*\LENGTH}) -- ++(0, \OverLen, 0);
    \node[green] at ({0.5*\LENGTH}, \LENGTH, {0.5*\LENGTH}) {$\bullet$};
    \node[green,left] at ({0.5*\LENGTH}, \LENGTH, {0.5*\LENGTH}) {$v_{i, j+\SmallHalf, k}$};
    \draw[green,-stealth] ({0.5*\LENGTH}, \LENGTH, {0.5*\LENGTH}) -- ++(0, \OverLen, 0);

    \node[blue] at ({0.5*\LENGTH}, {0.5*\LENGTH}, 0) {$\bullet$};
    \node[blue, right, opacity=\Opacity] at ({0.5*\LENGTH}, {0.5*\LENGTH}, 0) {$v_{i, j, k-\SmallHalf}$};
    \draw[blue, -stealth, opacity=\Opacity5] ({0.5*\LENGTH}, {0.5*\LENGTH}, 0) -- ++(0, 0, \OverLen);
    \node[blue] at ({0.5*\LENGTH}, {0.5*\LENGTH}, \LENGTH) {$\bullet$};
    \node[blue, right] at ({0.5*\LENGTH}, {0.5*\LENGTH}, \LENGTH) {$v_{i, j, k+\SmallHalf}$};
    \draw[blue,-stealth] ({0.5*\LENGTH}, {0.5*\LENGTH}, \LENGTH) -- ++(0, 0, \OverLen);

    \node at ({0.5*\LENGTH}, {0.5*\LENGTH}, {0.5*\LENGTH}) {$\blacksquare$};
    \node[opacity=\Opacity, below right] at ({0.5*\LENGTH}, {0.5*\LENGTH}, {0.5*\LENGTH}) {$p_{i,j,k}$};

    \node[gray] at (izjzkz) {$\bullet$};
    \node[gray] at (iojzkz) {$\bullet$};
    \node[gray] at (izjokz) {$\bullet$};
    \node[gray] at (iojokz) {$\bullet$};
    \node[gray] at (izjzko) {$\bullet$};
    \node[gray] at (iojzko) {$\bullet$};
    \node[gray] at (izjoko) {$\bullet$};
    \node[gray] at (iojoko) {$\bullet$};
    \node[above left] at (izjzkz) {$n_0$};
    \node[above left] at (iojzkz) {$n_1$};
    \node[above left] at (izjokz) {$n_2$};
    \node[above left] at (iojokz) {$n_3$};
    \node[above left] at (izjzko) {$n_4$};
    \node[above left] at (iojzko) {$n_5$};
    \node[above left] at (izjoko) {$n_6$};
    \node[above left] at (iojoko) {$n_7$};

  \end{tikzpicture}
  \caption{An illustration of $\bm{v}_h$ and $W_h$.} \label{fig:dof}
\end{figure}

In the discretization, the term $\int_{\Omega} \mathbb{K}^{-1} \bm{v}_h \cdot \bm{w}_h \di V$ appears as a mass matrix. The trick of TPFA employs a trapezoidal quadrature rule, resulting in a diagonal mass matrix that can be trivially inverted. We first observe that
\[
  \int_{\Omega} \mathbb{K}^{-1} \bm{v}_h \cdot \bm{w}_h \di V =\sum_{(i,j,k)\in \mathcal{I}_h} \int_{T_{i,j,k}}  \RoundBrackets*{\kappa_{i,j,k}^x}^{-1} v_h^x w_h^x + \RoundBrackets*{\kappa_{i,j,k}^y}^{-1} v_h^y w_h^y+\RoundBrackets*{\kappa_{i,j,k}^z}^{-1} v_h^z w_h^z\di V,
\]
and we then introduce an approximation for $\int_{T_{i,j,k}} v_h^x w_h^x \di V$ as
\[
  \int_{T_{i,j,k}} v_h^x w_h^x \di V \approx \frac{h^xh^yh^z}{8}\RoundBrackets*{v_h^x|_{n_0} w_h^x|_{n_0}+\dots+v_h^x|_{n_7} w_h^x|_{n_7}},
\]
where $n_0,\dots,n_7$ are $8$ nodes of the element $T_{i,j,k}$ as shown in \cref{fig:dof}. According to the property of RT\textsubscript{$0$} bases, we can see that $v_h^x|_{n_0}=v_h^x|_{n_2}=v_h^x|_{n_4}=v_h^x|_{n_6}=v_{i-\SmallHalf,j,k}$ and $v_h^x|_{n_1}=v_h^x|_{n_3}=v_h^x|_{n_5}=v_h^x|_{n_7}=v_{i+\SmallHalf,j,k}$. Consequently, we can deduce that
\[
  \int_{T_{i,j,k}} \RoundBrackets*{\kappa_{i,j,k}^x}^{-1} v_h^xw_h^x \di V \approx \frac{h^xh^yh^z}{2}\RoundBrackets*{\kappa_{i,j,k}^x}^{-1} \RoundBrackets*{v_{i-\SmallHalf,j,k}w_{i-\SmallHalf,j,k}+v_{i+\SmallHalf,j,k}w_{i+\SmallHalf,j,k}}.
\]
Summing over the elements, we find that
\[
  \int_{\Omega} \mathbb{K}^{-1} \bm{v}_h \cdot \bm{w}_h \di V \approx h^xh^yh^z \sum_{F\in \mathcal{F}_h^\circ} \kappa_F^{-1} v_Fw_F,
\]
where $\kappa_F$ depends on whether $F$ is an internal face. For instance, if $F$ is an internal face indexed by $F_{i-\SmallHalf,j,k}$, then $\kappa_F=2/\RoundBrackets*{1/\kappa_{i-1,j,k}^x+1/\kappa_{i,j,k}^x}$; If $F$ is not an internal face, i.e., $F \subset \Gamma_\mathup{in} \cup \Gamma_\mathup{out}$, then $\kappa_F = 2\kappa^{z}_{i,j,k}$, where the index $(i,j,k)$ is determined by the element $T_{i,j,k}$ containing $F$.
For $p_h \in W_h$, according to the definition of bases, we can derive that

\begin{align*}
  \int_\Omega p_h \Div \bm{w}_h \di V & =\sum_{(i,j,k)\in \mathcal{I}_h} p_{i,j,k} \int_{T_{i,j,k}} \Div \bm{w}_h \di V=\sum_{(i,j,k)\in \mathcal{I}_h} p_{i,j,k} \int_{\partial T_{i,j,k}} \bm{n} \cdot \bm{w}_h \di A \\
                                      & = h^xh^yh^z \sum_{F\in\mathcal{F}_h^\circ} -p_F^\delta w_F,
\end{align*}

where $p_F^\delta=\RoundBrackets*{p_{i,j,k}-p_{i-1,j,k}}/h^x$ for a face $F$ marked as $F_{i-\SmallHalf,j,k}$, and similarly for other cases. Note that in the definition of $p_F^\delta$, we implicitly take a notation that \alert{$p_{i,j,k}=0$ if $(i,j,k) \in \CurlyBrackets{-1, N^x} \times \CurlyBrackets{-1, N^y} \times \CurlyBrackets{-1, N^z}$}. The discrete right-hand term in \cref{eq:mixed variational form p1} also employs the expression
\[
  -\int_{\Gamma_\mathup{out}} p_\mathup{out} \bm{n} \cdot \bm{w}_h \di A - \int_{\Gamma_\mathup{in}} p_\mathup{in} \bm{n} \cdot \bm{w}_h \di A =-h^xh^y \sum_{F\in \mathcal{F}_h^\circ} \tilde{p}_F w_F,
\]
where
\[
  \tilde{p}_F=\begin{cases}
    p_\mathup{out}, & F \subset \Gamma_\mathup{out}, \\
    -p_\mathup{in}, & F \subset \Gamma_\mathup{in},  \\
    0,              & \text{otherwise}.
  \end{cases}
\]
It is now clear that the discretized variational form for \cref{eq:mixed variational form p1} is equivalent to
\begin{equation}\label{eq:flux}
  v_F= -\kappa_F \RoundBrackets*{p_F^\delta+\frac{\tilde{p}_F}{h^z}}
\end{equation}
by the trapezoidal quadrature rule.
By considering the expression of $v_F$ and rewriting
\[
  -\int_{\Omega} \Div \bm{v}_h q_h \di V = 0 \text{ as } \sum_{F\in \mathcal{F}_h^\circ} v_F q_F^\delta=0,
\]
we can obtain a bilinear form on $W_h$ as $a(p_h, q_h) \coloneqq \sum_{F\in \mathcal{F}_h^\circ} \kappa_F p_F^\delta q_F^\delta$, while $\tilde{p}_F$ in \cref{eq:flux} appears on the right-hand side.
If we partition $\mathcal{F}_h^\circ$ as $\mathcal{F}_h^*\cup \mathcal{F}_h^\mathup{in} \cup \mathcal{F}_h^\mathup{out}$, where we denote the set of all internal faces by $\mathcal{F}_h^*$, the set of all faces on $\Gamma_\mathup{in}$ by $\mathcal{F}_h^\mathup{in}$ and the set of all faces on $\Gamma_\mathup{out}$ by $\mathcal{F}_h^\mathup{out}$. We can also rewrite $a(\cdot, \cdot)$ as
\begin{align*}
  a(p_h, q_h) & = \sum_{F\in \mathcal{F}_h^*} \kappa_F p_F^\delta q_F^\delta + \sum_{F\in \mathcal{F}_h^\mathup{in}} \kappa_F p_F^\delta q_F^\delta + \sum_{F\in \mathcal{F}_h^\mathup{out}} \kappa_F p_F^\delta q_F^\delta                                                      \\
              & = \sum_{F\in \mathcal{F}_h^*} \kappa_F p_F^\delta q_F^\delta + \sum_{i=0}^{N^x-1}\sum_{j=0}^{N^y-1} \frac{2\kappa^z_{i,j,0}}{(h^z)^2} p_{i,j,0}q_{i,j,0} + \sum_{i=0}^{N^x-1}\sum_{j=0}^{N^y-1} \frac{2\kappa^z_{i,j,N^z-1}}{(h^z)^2} p_{i,j,N^z-1}q_{i,j,N^z-1} \\
              & \coloneqq a^*(p_h, q_h) + a^\mathup{in}(p_h, q_h) + a^\mathup{out}(p_h, q_h).
\end{align*}
By applying scaling as $\tilde{\kappa}^\diamond_{i,j,k}=\kappa^\diamond_{i,j,k}/(h^\diamond)^2$ for $\diamond\in \CurlyBrackets{x,y,z}$, we can derive the following expressions,
\begin{equation}\label{eq:scaled bilinear forms}
  \begin{split}
    a^*(p_h, q_h)            & = \sum_{i=1}^{N^x-1}\sum_{j=0}^{N^y-1}\sum_{k=0}^{N^z-1} \tilde{\kappa}_{i-\SmallHalf,j,k} \RoundBrackets*{p_{i,j,k}-p_{i-1,j,k}}\RoundBrackets*{q_{i,j,k}-q_{i-1,j,k}}       \\
    & \qquad +\sum_{i=0}^{N^x-1}\sum_{j=1}^{N^y-1}\sum_{k=0}^{N^z-1}\tilde{\kappa}_{i,j-\SmallHalf,k} \RoundBrackets*{p_{i,j,k}-p_{i,j-1,k}}\RoundBrackets*{q_{i,j,k}-q_{i,j-1,k}}  \\
    & \qquad +\sum_{i=0}^{N^x-1}\sum_{j=0}^{N^y-1}\sum_{k=1}^{N^z-1}\tilde{\kappa}_{i,j,k-\SmallHalf} \RoundBrackets*{p_{i,j,k}-p_{i,j,k-1}}\RoundBrackets*{q_{i,j,k}-q_{i,j,k-1}}, \\
    a^\mathup{in}(p_h, q_h)  & =  \sum_{i=0}^{N^x-1}\sum_{j=0}^{N^y-1} 2\tilde{\kappa}_{i,j,0}^z p_{i,j,0}q_{i,j,0},                                                                                         \\
    a^\mathup{out}(p_h, q_h) & = \sum_{i=0}^{N^x-1}\sum_{j=0}^{N^y-1} 2\tilde{\kappa}_{i,j,N^z-1}^z p_{i,j,N^z-1}q_{i,j,N^z-1},
  \end{split}
\end{equation}
which can be utilized to simplify the implementation.

The complete routine consists of several steps: first solve the algebraic linear system $\mathtt{A}\mathtt{p}=\mathtt{b}$ derived by the aforementioned scheme; then retrieve the flux via the formula \cref{eq:flux}; finally apply \cref{eq:effective formula}  to obtain the homogenized coefficient $\kappa_\mathup{eff}^z$.

\section{Methods}\label{sec:methods}
\subsection{Preconditioned Conjugate Gradient}
Proper preconditioning is essential to ensure efficient performance when solving the algebraic linear system using iterative methods \cite{Saad2003}. Considering the expressions in \cref{eq:scaled bilinear forms} that demonstrate heterogeneity through terms such as $\tilde{\kappa}_{i-\SmallHalf,j,k}$, $\tilde{\kappa}_{i,j-\SmallHalf,k}$, $\tilde{\kappa}_{i,j-\SmallHalf,k}$, $\tilde{\kappa}^z_{i,j,0}$ and $\tilde{\kappa}^z_{i,j,N^z-1}$, we can extend the fundamental idea from FFT-based homogenization methods \cite{Moulinec1995,Moulinec1998,Schneider2017,Zeman2010}. This idea involves introducing a spatially homogeneous reference medium and constructing a preconditioner via three bilinear forms $a^*_\mathup{ref}(\cdot,\cdot)$, $a^\mathup{in}_\mathup{ref}(\cdot,\cdot)$ and $a^\mathup{out}_\mathup{ref}(\cdot,\cdot)$ defined as
\begin{alignat*}{1}
  a^*_\mathup{ref}(z_h, q_h)            & =\kappa_\mathup{ref}^x \sum_{i=1}^{N^x-1}\sum_{j=0}^{N^y-1}\sum_{k=0}^{N^z-1}  \RoundBrackets*{z_{i,j,k}-z_{i-1,j,k}}\RoundBrackets*{q_{i,j,k}-q_{i-1,j,k}}                                                                               \\
                                        & \qquad +\kappa_\mathup{ref}^y \sum_{i=0}^{N^x-1}\sum_{j=1}^{N^y-1}\sum_{k=0}^{N^z-1} \RoundBrackets*{z_{i,j,k}-z_{i,j-1,k}}\RoundBrackets*{q_{i,j,k}-q_{i,j-1,k}}                                                                         \\
                                        & \qquad +\kappa_\mathup{ref}^z\sum_{i=0}^{N^x-1}\sum_{j=0}^{N^y-1}\sum_{k=1}^{N^z-1} \RoundBrackets*{z_{i,j,k}-z_{i,j,k-1}}\RoundBrackets*{q_{i,j,k}-q_{i,j,k-1}}, \stepcounter{equation}\tag{\theequation}\label{eq:refer bilinear forms} \\
  a^\mathup{in}_\mathup{ref}(z_h, q_h)  & =  2\kappa_\mathup{ref}^\mathup{in} \sum_{i=0}^{N^x-1}\sum_{j=0}^{N^y-1}  z_{i,j,0}q_{i,j,0},                                                                                                                                             \\
  a^\mathup{out}_\mathup{ref}(z_h, q_h) & = 2\kappa_\mathup{ref}^\mathup{out}\sum_{i=0}^{N^x-1}\sum_{j=0}^{N^y-1}z_{i,j,N^z-1}q_{i,j,N^z-1}                                                                                                                                         \\
\end{alignat*}
for all $z_h$ and $q_h \in W_h$, where $\kappa_\mathup{ref}^x$, $\kappa_\mathup{ref}^y$, $\kappa_\mathup{ref}^z$, $\kappa_\mathup{ref}^\mathup{in}$ and $\kappa_\mathup{ref}^\mathup{out}$ are adjustable positive constants. Let $\mathtt{A}_\mathup{ref}$ be the matrix derived from the bilinear form $a_\mathup{ref}(\cdot, \cdot)\coloneqq a_\mathup{ref}^*(\cdot,\cdot)+a_\mathup{ref}^\mathup{in}(\cdot,\cdot)+a_\mathup{ref}^\mathup{out}(\cdot,\cdot)$. The PCG method for solving the algebraic system $\mathtt{A}\mathtt{p}=\mathtt{b}$ is stated in \cref{alg:pcg}. Note that the operation $\mathtt{A}_\mathup{ref}^{-1}\mathtt{r}$ in \cref{alg:pcg} is executed \replace{in a matrix-free manner}{independently of the original sparse matrix $\mathtt{A}$}. This implies that the explicit matrix representation of $\mathtt{A}_\mathup{ref}^{-1}$ is unnecessary, and instead, we implement a linear map $\mathtt{r} \mapsto \mathtt{A}_\mathup{ref}^{-1} \mathtt{r}$.

\begin{algorithm}[!ht]
  \caption{The PCG method for solving the linear algebraic system $\mathtt{A}\mathtt{p}=\mathtt{b}$.} \label{alg:pcg}
  \begin{algorithmic}[1]
    \Require The sparse matrix $\mathtt{A}$, the right-hand vector $\mathtt{b}$, the output vector $\mathup{p}$, temporal vectors $\mathtt{r}$, $\mathtt{w}$ and $\mathtt{z}$, temporal scalars $\rho$, $\rho'$, $\alpha$
    \State $\mathtt{r} \leftarrow \mathtt{b} - \mathtt{A}\mathtt{p}$, $\mathtt{z} \leftarrow \mathtt{A}_\mathup{ref}^{-1}\mathtt{r}$, $\mathtt{w}\leftarrow \mathtt{z}$, $\rho \leftarrow \mathtt{r} \cdot \mathtt{z}$
    \While{not meet the exit criterion}
    \State $\mathtt{z} \leftarrow \mathtt{A}\mathtt{w}$
    \State $\alpha \leftarrow \rho / (\mathtt{z} \cdot \mathtt{w})$
    \State $\mathtt{p} \leftarrow \mathup{p} + \alpha \mathtt{w}$
    \State $\mathtt{r} \leftarrow \mathtt{r} - \alpha \mathtt{z}$
    \State $\mathtt{z} \leftarrow \mathtt{A}_\mathup{ref}^{-1}\mathtt{r}$
    \State $\rho'\leftarrow \mathtt{r} \cdot \mathtt{z}$
    \State $\mathtt{w} \leftarrow \mathtt{z} + (\rho'/\rho) \mathtt{w}$
    \State $\rho \leftarrow \rho'$
    \EndWhile
    \State \Return $\mathtt{p}$
  \end{algorithmic}
\end{algorithm}

To complete the method, the reference parameters $\kappa_\mathup{ref}^x$, $\kappa_\mathup{ref}^y$, $\kappa_\mathup{ref}^z$, $\kappa_\mathup{ref}^\mathup{in}$ and $\kappa_\mathup{ref}^\mathup{out}$ still need to be determined, while the convergence theory of PCG can offer valuable insights in this regard.
Taking $\Cond(\mathtt{A}_\mathup{ref}^{-1}\mathtt{A}) \coloneqq \lambda_\mathup{max}(\mathtt{A}_\mathup{ref}^{-1}\mathtt{A})/\lambda_\mathup{min}(\mathtt{A}_\mathup{ref}^{-1}\mathtt{A})$ the condition number of the the preconditioned system, the convergence rate of PCG has an estimate as
\[
  2\RoundBrackets*{\frac{\sqrt{\Cond(\mathtt{A}_\mathup{ref}^{-1}\mathtt{A})}-1}{\sqrt{\Cond(\mathtt{A}_\mathup{ref}^{-1}\mathtt{A})}+1}}^n,
\]
where $n$ is the iteration number \cite{Dolean2015}. The following lemma provides an estimate of $\Cond(\mathtt{A}_\mathup{ref}^{-1}\mathtt{A})$.
\begin{lemma}\label{lem:cond}
  If there exist constants $0< \Lambda'\leq \Lambda''$ such that
  \[
    \Lambda' a_\mathup{ref}(z_h, q_h) \leq a(z_h, q_h) \leq \Lambda'' a_\mathup{ref}(z_h, q_h)
  \]
  for all $z_h$ and $q_h \in W_h$, then $\Cond(\mathtt{A}_\mathup{ref}^{-1}\mathtt{A})\leq \Lambda'' / \Lambda'$.
\end{lemma}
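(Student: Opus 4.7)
The plan is to interpret the hypothesis as a quadratic-form (Loewner) inequality between the system matrix $\mathtt{A}$ and the preconditioner $\mathtt{A}_\mathup{ref}$, and then translate the bound on Rayleigh quotients into a bound on the spectrum of $\mathtt{A}_\mathup{ref}^{-1}\mathtt{A}$ via a standard symmetric-similarity argument. The hypothesis, although written for all pairs $(z_h,q_h)$, is substantively a statement about the quadratic forms $a(z_h,z_h)$ and $a_\mathup{ref}(z_h,z_h)$ (by polarisation, the two formulations are equivalent when both forms are symmetric).

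First, I would observe that both $a(\cdot,\cdot)$ and $a_\mathup{ref}(\cdot,\cdot)$ are symmetric and, owing to the strictly positive coefficients $\kappa_F$ (resp.\ positive reference constants) together with the Dirichlet contributions $a^\mathup{in}$ and $a^\mathup{out}$ that eliminate the constant mode, positive definite on $W_h$. Consequently the nodal-basis matrices $\mathtt{A}$ and $\mathtt{A}_\mathup{ref}$ are symmetric positive definite, and in particular $\mathtt{A}_\mathup{ref}^{1/2}$ and $\mathtt{A}_\mathup{ref}^{-1/2}$ exist. Setting $z_h=q_h$ in the hypothesis and writing $\mathtt{z}$ for the corresponding coefficient vector translates it into
\[
\Lambda'\, \mathtt{z}^\top \mathtt{A}_\mathup{ref} \mathtt{z} \;\leq\; \mathtt{z}^\top \mathtt{A} \mathtt{z} \;\leq\; \Lambda''\, \mathtt{z}^\top \mathtt{A}_\mathup{ref} \mathtt{z}
\]
for every $\mathtt{z}$.

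Next, I would substitute $\mathtt{z}=\mathtt{A}_\mathup{ref}^{-1/2}\mathtt{y}$ and divide through by $\mathtt{y}^\top\mathtt{y}$, turning the double inequality into
\[
\Lambda' \;\leq\; \frac{\mathtt{y}^\top \mathtt{M} \mathtt{y}}{\mathtt{y}^\top \mathtt{y}} \;\leq\; \Lambda'',\qquad \mathtt{M}\coloneqq \mathtt{A}_\mathup{ref}^{-1/2}\mathtt{A}\mathtt{A}_\mathup{ref}^{-1/2}.
\]
Since $\mathtt{M}$ is symmetric, the Courant--Fischer characterisation of eigenvalues gives $\lambda_\mathup{min}(\mathtt{M})\geq \Lambda'$ and $\lambda_\mathup{max}(\mathtt{M})\leq \Lambda''$.

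Finally, I would close the argument by the similarity $\mathtt{M}=\mathtt{A}_\mathup{ref}^{1/2}\bigl(\mathtt{A}_\mathup{ref}^{-1}\mathtt{A}\bigr)\mathtt{A}_\mathup{ref}^{-1/2}$, which shows that $\mathtt{M}$ and $\mathtt{A}_\mathup{ref}^{-1}\mathtt{A}$ have the same spectrum. Hence $\lambda_\mathup{min}(\mathtt{A}_\mathup{ref}^{-1}\mathtt{A})\geq \Lambda'$ and $\lambda_\mathup{max}(\mathtt{A}_\mathup{ref}^{-1}\mathtt{A})\leq \Lambda''$, so $\Cond(\mathtt{A}_\mathup{ref}^{-1}\mathtt{A})\leq \Lambda''/\Lambda'$. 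The only real subtlety is that $\mathtt{A}_\mathup{ref}^{-1}\mathtt{A}$ is in general non-symmetric so a direct Rayleigh-quotient argument on it is not available; the symmetrised matrix $\mathtt{M}$ is the standard workaround, and the rest of the proof is essentially bookkeeping.
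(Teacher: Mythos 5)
Your proof is correct and is exactly the standard symmetric-similarity argument the authors have in mind: the paper states that the proof of this lemma is ``trivial'' and omits it, so there is nothing to diverge from. One small quibble: your parenthetical claim that the ``for all pairs $(z_h,q_h)$'' formulation is equivalent to the diagonal one by polarisation is not right (taken literally, an inequality between bilinear forms holding for \emph{all} pairs would force the forms to be proportional, since replacing $q_h$ by $-q_h$ reverses it), but your decision to read the hypothesis as a quadratic-form inequality is clearly what the authors intend and is all the argument needs.
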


The proof \add{of} \cref{lem:cond} is trivial, and we omit it here. By comparing \cref{eq:scaled bilinear forms} and \cref{eq:refer bilinear forms}, we can obtain the following expressions for $\Lambda'$ and $\Lambda''$:
\begin{align*}
  \Lambda'  & = \min\CurlyBrackets*{\frac{\kappa^x_{\mathup{min}}}{\kappa^x_\mathup{ref}}, \frac{\kappa^y_{\mathup{min}}}{\kappa^y_\mathup{ref}}, \frac{\kappa^z_{\mathup{min}}}{\kappa^z_\mathup{ref}}, \frac{\kappa^\mathup{in}_{\mathup{min}}}{\kappa^\mathup{in}_\mathup{ref}}, \frac{\kappa^\mathup{out}_{\mathup{min}}}{\kappa^\mathup{in}_\mathup{ref}}}, \\
  \Lambda'' & = \max\CurlyBrackets*{\frac{\kappa^x_{\mathup{max}}}{\kappa^x_\mathup{ref}}, \frac{\kappa^y_{\mathup{max}}}{\kappa^y_\mathup{ref}}, \frac{\kappa^z_{\mathup{max}}}{\kappa^z_\mathup{ref}}, \frac{\kappa^\mathup{in}_{\mathup{max}}}{\kappa^\mathup{in}_\mathup{ref}}, \frac{\kappa^\mathup{out}_{\mathup{max}}}{\kappa^\mathup{in}_\mathup{ref}}},
\end{align*}
where
\begin{equation} \label{eq:parameters for linear programming}
  \begin{alignedat}{2}
    & \kappa^x_{\mathup{min}}  \coloneqq  \min_{(i,j,k)}\tilde{\kappa}_{i-\SmallHalf,j,k},\quad    && \kappa^x_{\mathup{max}}  \coloneqq  \max_{(i,j,k)}\tilde{\kappa}_{i-\SmallHalf,j,k},   \\
    & \kappa^y_{\mathup{min}}  \coloneqq  \min_{(i,j,k)}\tilde{\kappa}_{i,j-\SmallHalf,k},\quad    && \kappa^y_{\mathup{max}}  \coloneqq  \max_{(i,j,k)}\tilde{\kappa}_{i,j-\SmallHalf,k},   \\
    & \kappa^z_{\mathup{min}}  \coloneqq  \min_{(i,j,k)}\tilde{\kappa}_{i,j,k-\SmallHalf},\quad    && \kappa^z_{\mathup{max}}  \coloneqq  \max_{(i,j,k)}\tilde{\kappa}_{i,j,k-\SmallHalf},   \\
    & \kappa^\mathup{in}_{\mathup{min}} \coloneqq \min_{(i,j)}\tilde{\kappa}_{i,j,0}^z,\quad       && \kappa^\mathup{in}_{\mathup{max}} \coloneqq \max_{(i,j)}\tilde{\kappa}_{i,j,0}^z,      \\
    & \kappa^\mathup{out}_{\mathup{min}} \coloneqq \min_{(i,j)}\tilde{\kappa}_{i,j,N^z-1}^z,\quad  && \kappa^\mathup{out}_{\mathup{max}} \coloneqq \max_{(i,j)}\tilde{\kappa}_{i,j,N^z-1}^z. \\
  \end{alignedat}
\end{equation}
Therefore, it is reasonable to suggest that minimizing $\Lambda''/\Lambda'$ could yield an appropriate configuration for the reference parameters $\SquareBrackets*{\kappa_\mathup{ref}^x,\ \kappa_\mathup{ref}^y,\ \kappa_\mathup{ref}^z,\ \kappa_\mathup{ref}^\mathup{in},\ \kappa_\mathup{ref}^\mathup{out}}$. It is also interesting to observe that the minimization of $\Lambda''/\Lambda'$ is equivalent to the following linear programming problem:
\begin{equation}\label{eq:linear programming}
  \begin{aligned}
    \min        & \quad \lambda''-\lambda'                                                                                                                             \\
    \text{s.t.} & \quad c^x+\lambda' \leq \log \kappa^x_\mathup{min},\ c^y+\lambda' \leq \log \kappa^y_\mathup{min},\ c^z+\lambda' \leq \log \kappa^z_\mathup{min},    \\
                & \quad c^\mathup{in} + \lambda' \leq \log \kappa^\mathup{in}_\mathup{min},\ c^\mathup{out} + \lambda' \leq \log \kappa^\mathup{out}_\mathup{min},     \\
                & \quad c^x+\lambda'' \geq \log \kappa^x_\mathup{max},\ c^y+\lambda'' \geq \log \kappa^y_\mathup{max},\ c^z+\lambda'' \geq \log \kappa^z_\mathup{max}, \\
                & \quad c^\mathup{in} + \lambda'' \geq \log \kappa^\mathup{in}_\mathup{max},\ c^\mathup{out} + \lambda'' \geq \log \kappa^\mathup{out}_\mathup{max}.
  \end{aligned}
\end{equation}
The solution $\SquareBrackets*{c^x_*,\ c^y_*,\ c^z_*,\ c^\mathup{in}_*,\ c^\mathup{out}_*,\ \lambda'_*,\ \lambda''_*}$ to the optimization problem \cref{eq:linear programming} offers a setting of the reference parameters as
\begin{equation}\label{eq:output optimal ref parameters}
  \SquareBrackets*{\kappa_\mathup{ref}^x,\ \kappa_\mathup{ref}^y,\ \kappa_\mathup{ref}^z,\ \kappa_\mathup{ref}^\mathup{in},\ \kappa_\mathup{ref}^\mathup{out}} = \SquareBrackets*{\exp c^x_*,\ \exp c^y_*,\ \exp c^z_*,\ \exp c^\mathup{in}_*,\ \exp c^\mathup{out}_*}.
\end{equation}
We outline the procedure for determining the reference parameters in \cref{alg:refer parameters}. Note that the linear programming problem \cref{eq:linear programming} involves only $7$ decision variables and $10$ constraints, which could be easily tackled by modern linear programming solvers.

\begin{algorithm}[!ht]
  \caption{The method of determining $\kappa_\mathup{ref}^x$, $\kappa_\mathup{ref}^y$, $\kappa_\mathup{ref}^z$, $\kappa_\mathup{ref}^\mathup{in}$ and $\kappa_\mathup{ref}^\mathup{out}$.}\label{alg:refer parameters}
  \begin{algorithmic}[1]
    \Require The coefficient profiles $\CurlyBrackets*{\tilde{\kappa}^x_{i,j,k},\ \tilde{\kappa}^y_{i,j,k},\ \tilde{\kappa}^z_{i,j,k}}_{(i,j,k)\in \mathcal{I}_h}$
    \State Calculate $\kappa^x_\mathup{min},\dots,\kappa^\mathup{out}_\mathup{max}$ via \cref{eq:parameters for linear programming}
    \State Solve \cref{eq:linear programming} with a linear programming solver and obtain the optimal solution $\SquareBrackets*{c^x_*,\ c^y_*,\ c^z_*,\ c^\mathup{in}_*,\ c^\mathup{out}_*,\ \lambda'_*,\ \lambda''_*}$
    \State Calculate $\SquareBrackets*{\kappa_\mathup{ref}^x,\ \kappa_\mathup{ref}^y,\ \kappa_\mathup{ref}^z,\ \kappa_\mathup{ref}^\mathup{in},\ \kappa_\mathup{ref}^\mathup{out}}$ according to the formula \cref{eq:output optimal ref parameters}
    \State \Return $\SquareBrackets*{\kappa_\mathup{ref}^x,\ \kappa_\mathup{ref}^y,\ \kappa_\mathup{ref}^z,\ \kappa_\mathup{ref}^\mathup{in},\ \kappa_\mathup{ref}^\mathup{out}}$
  \end{algorithmic}
\end{algorithm}

\subsection{Preconditioner system solver}
In this subsection, our objective is detailing the algorithm of solving the preconditioner system \replace{$\mathtt{A}_\mathup{ref}\mathtt{p}=\mathtt{r}$}{$\mathtt{A}_\mathup{ref}\mathtt{z}=\mathtt{r}$}.
According to \cite{Strang1999}, There exist four distinct types of Discrete Cosine Transformations (DCT). Regarding the bilinear forms in \cref{eq:refer bilinear forms}, we should adopt the DCT referred to as DCT-2 in \cite{Strang1999}. The definition of this type of 1D DCT is presented below, where the multiplying constants ($1$ in \cref{eq:forward DCT} and $2/N$ in \cref{eq:backword DCT}) in the forward and backward transformation may vary in other literature sources (e.g., \cite{Makhoul1980}).

\begin{definition}[1D DCT]
  The forward 1D DCT of $u \in \Real^N$ to $\hat{u}$ is defined as
  \begin{equation}\label{eq:forward DCT}
    \hat{u}_{i'} = \sum_{i=0}^{N} u_i \cos\RoundBrackets*{\pi\frac{(2i+1)i'}{2N}}
  \end{equation}
  for $i'=0,\dots, N-1$. The backward 1D DCT of $\hat{u} \in \Real^N$ to $u$ is defined as
  \begin{equation}\label{eq:backword DCT}
    u_{i} = \frac{2}{N}\sum_{i'=0}^{N} \hat{u}_{i'} \alpha_{i'}  \cos\RoundBrackets*{\pi\frac{(2i+1)i'}{2N}}
  \end{equation}
  for $i=0,\dots, N-1$, where $\alpha_{i'}$ is equal to $1/2$ if $i'=0$, and $1$ otherwise.
\end{definition}

The DCT can be derived from the eigenvector expansion. Taking
\[
  v^k\coloneqq \SquareBrackets*{\cos\RoundBrackets*{\pi\frac{k}{2N}}, \cos\RoundBrackets*{\pi\frac{3k}{2N}}, \dots, \cos\RoundBrackets*{\pi\frac{(2N-1)k}{2N}}} \in \Real^N
\]
for $k=0,\dots, N-1$ and
\[
  A=\begin{pmatrix}
    1      & -1     & 0      & \cdots & 0      \\
    -1     & 2      & -1     & ~      & \vdots \\
    0      & -1     & 2      & \ddots & 0      \\
    \vdots & ~      & \ddots & \ddots & -1     \\
    0      & \cdots & 0      & -1     & 1
  \end{pmatrix} \in \Real^{N\times N},
\]
we can validate that $Av^k=\lambda_k(A) v^k$ with $\lambda_k(A)=2(1-\cos \frac{k \pi}{N})$, $v^k\cdot v^k=\frac{N}{2}$ if $k=1,\dots,N-1$, and $v^0\cdot v^0=N$. Moreover, if $u$ and $w \in \Real^N$, it is easy to see that $u\cdot Aw= \sum_{i=1}^{N-1} (u_i-u_{i-1})(w_i-w_{i-1})$. Due to $a_\mathup{ref}^\mathup{in}(\cdot,\cdot)$ and $a_\mathup{ref}^\mathup{out}(\cdot,\cdot)$, we should apply DCTs on $x$- and $y$-direction, but not on $z$-direction, i.e.,
\begin{align*}
  z_{i,j,k} & = \frac{4}{N^xN^y}\sum_{i'=0}^{N^x-1}\sum_{j'=0}^{N^y-1} \hat{z}_{i',j',k} \alpha_{i'}\alpha_{j'} \cos\RoundBrackets*{\frac{\pi (2i+1)i'}{2N^x}}\cos\RoundBrackets*{\frac{\pi (2j+1)j'}{2N^y}}, \\
  q_{i,j,k} & = \frac{4}{N^xN^y}\sum_{i'=0}^{N^x-1}\sum_{j'=0}^{N^y-1} \hat{q}_{i',j',k} \alpha_{i'}\alpha_{j'} \cos\RoundBrackets*{\frac{\pi (2i+1)i'}{2N^x}}\cos\RoundBrackets*{\frac{\pi (2j+1)j'}{2N^y}}.
\end{align*}
Then, thanks to the orthogonality of eigenvectors, we can obtain
\begin{align*}
  a^*_\mathup{ref}(z_h, q_h)            & = \frac{4}{N^xN^y}\Big\{\kappa_\mathup{ref}^x\sum_{k=0}^{N^z-1} \sum_{i'=0}^{N^x-1}\sum_{j'=0}^{N^y-1} 2\RoundBrackets*{1-\cos \frac{i'\pi}{N^x}}\alpha_{i'}\alpha_{j'}\hat{z}_{i',j',k} \hat{q}_{i',j',k}                           \\
                                        & \qquad\qquad+\kappa_\mathup{ref}^y\sum_{k=0}^{N^z-1} \sum_{i'=0}^{N^x-1}\sum_{j'=0}^{N^y-1} 2\RoundBrackets*{1-\cos \frac{j'\pi}{N^y}}\alpha_{i'}\alpha_{j'} \hat{z}_{i',j',k} \hat{q}_{i',j',k}                                     \\
                                        & \qquad\qquad+\kappa^z_\mathup{ref}\sum_{k=1}^{N^z-1} \sum_{i'=0}^{N^x-1}\sum_{j'=0}^{N^y-1}\alpha_{i'}\alpha_{j'}\RoundBrackets*{\hat{z}_{i',j',k}-\hat{z}_{i',j',k-1}}\RoundBrackets*{\hat{q}_{i',j',k}-\hat{q}_{i',j',k-1}}\Big\}, \\
  a^\mathup{in}_\mathup{ref}(z_h, q_h)  & = 2\kappa^\mathup{in}_\mathup{ref}\frac{4}{N^xN^y} \sum_{i'=0}^{N^x-1}\sum_{j'=0}^{N^y-1}\alpha_{i'}\alpha_{j'}\hat{z}_{i',j',0}\hat{q}_{i',j',0},                                                                                   \\
  a^\mathup{out}_\mathup{ref}(z_h, q_h) & = 2\kappa^\mathup{out}_\mathup{ref}\frac{4}{N^xN^y} \sum_{i'=0}^{N^x-1}\sum_{j'=0}^{N^y-1}\alpha_{i'}\alpha_{j'}\hat{z}_{i',j',N^z-1}\hat{q}_{i',j',N^z-1}.
\end{align*}
If we denote the vector enumerating $k$ in $\hat{z}_{i',j',k}$ by $\hat{z}_{i',j',\bullet}$ and in $\hat{q}_{i',j',k}$ by $\hat{q}_{i',j',\bullet}$, and define a group of tridiagonal matrices $\CurlyBrackets{T_{i',j'}} \subset \Real^{N^z\times N^z}$ as
\begin{align*}
  T_{i',j'} & = 2\RoundBrackets*{1-\cos \frac{i'\pi}{N^x}} \kappa_\mathup{ref}^x \mathbb{I} +2\RoundBrackets*{1-\cos \frac{j'\pi}{N^y}}\kappa_\mathup{ref}^y \mathbb{I} \\
            & \quad +
  \begin{pmatrix}
    \kappa_\mathup{ref}^z + 2\kappa_\mathup{ref}^\mathup{in} & -\kappa_\mathup{ref}^z & 0                      & \cdots                 & 0                                                         \\
    -\kappa_\mathup{ref}^z                                   & 2\kappa_\mathup{ref}^z & -\kappa_\mathup{ref}^z & ~                      & \vdots                                                    \\
    0                                                        & -\kappa_\mathup{ref}^z & 2\kappa_\mathup{ref}^z & \ddots                 & 0                                                         \\
    \vdots                                                   & ~                      & \ddots                 & \ddots                 & -\kappa_\mathup{ref}^z                                    \\
    0                                                        & \cdots                 & 0                      & -\kappa_\mathup{ref}^z & \kappa_\mathup{ref}^z + 2\kappa_\mathup{ref}^\mathup{out}
  \end{pmatrix},
\end{align*}
where $\mathbb{I}$ is the identity matrix, we can derive a simplified expression for $a_\mathup{ref}(\cdot, \cdot)$ as
\begin{equation} \label{eq:ref bilinear}
  a_\mathup{ref}(z_h, q_h)=\frac{4}{N^xN^y} \sum_{i'=0}^{N^x-1} \sum_{j'=0}^{N^y-1} \alpha_{i'}\alpha_{j'} T_{i',j'}\hat{z}_{i',j',\bullet} \cdot \hat{q}_{i',j',\bullet}.
\end{equation}
The solving process for the algebraic linear systems \replace{$\mathtt{A}_\mathup{ref}\mathtt{p}=\mathtt{r}$}{$\mathtt{A}_\mathup{ref}\mathtt{z}=\mathtt{r}$} is now clear, and we summarize it into \cref{alg:solve ref}, where DCTs have already been replaced by FCT.


\begin{algorithm}[!ht]
  \caption{The method for solving the preconditioner system.}\label{alg:solve ref}
  \begin{algorithmic}[1]
    \Require The output vector \replace{$\mathtt{p}$}{$\mathtt{z}$}, the right-hand vector $\mathtt{r}$, the data for $T_{i',j'}$ with $0\leq i' < N^x$ and $0 \leq j' < N^y$
    \State For each $k$, apply forward FCT transformations to $r_{i,j,k}$ w.r.t.~$x$- and $y$-direction, and obtain $\hat{r}_{i',j',k}$
    \State For each $i'$ and $j'$, solve the system $T_{i',j'}\hat{z}_{i',j',\bullet}=\hat{r}_{i',j',\bullet}$
    \State  For each $k$, apply backward FCT transformations to $\hat{z}_{i',j',k}$ w.r.t.~$x$- and $y$-direction, and obtain $z_{i,j,k}$
    \State \Return \replace{$\mathtt{p}$}{$\mathtt{z}$}
  \end{algorithmic}
\end{algorithm}

An intermediate result from \cref{eq:ref bilinear} is the following proposition, which states the necessity of preconditioning the linear system. \add{Note that the classic theory only provides an upper bound of the condition number \cite{Brenner2008}.}
\begin{proposition}\label{prop:cond}
  Let $K'$ and $K''$ be positive constants such that $K' \leq \tilde{\kappa}_{i,j,k}^x,\tilde{\kappa}_{i,j,k}^y,\tilde{\kappa}_{i,j,k}^z\leq K''$ for all $(i,j,k)\in \mathcal{I}_h$. Then, the following estimate of the condition number of $\mathtt{A}$ holds,
  \[
    C_1\frac{K'}{K''} \RoundBrackets*{N^z}^2 \leq \Cond(\mathtt{A}) \leq C_2\frac{K''}{K'} \RoundBrackets*{N^z}^2,
  \]
  where $C_1$ and $C_2$ are generic positive constant.
\end{proposition}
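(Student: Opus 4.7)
The plan is to bound $\Cond(\mathtt{A})$ by comparing with a homogeneous reference matrix whose spectrum can be computed in closed form via \cref{eq:ref bilinear}. I would first apply \cref{lem:cond} with the constant choice $\kappa_\mathup{ref}^x=\kappa_\mathup{ref}^y=\kappa_\mathup{ref}^z=\kappa_\mathup{ref}^\mathup{in}=\kappa_\mathup{ref}^\mathup{out}=1$. Since every $\tilde{\kappa}$ appearing in \cref{eq:parameters for linear programming} (including the TPFA harmonic means, which lie in $[K',K'']$ whenever the pointwise $\tilde{\kappa}^\diamond$ do) satisfies $K'\leq \tilde{\kappa}\leq K''$, the min/max formulas for $\Lambda'$ and $\Lambda''$ immediately give $K'\leq \Lambda'\leq \Lambda''\leq K''$, so that both $\mathtt{A}-K'\mathtt{A}_\mathup{ref}$ and $K''\mathtt{A}_\mathup{ref}-\mathtt{A}$ are positive semidefinite. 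Taking Rayleigh quotients on both sides yields $K'\lambda_\mathup{min}(\mathtt{A}_\mathup{ref})\leq \lambda_\mathup{min}(\mathtt{A})\leq K''\lambda_\mathup{min}(\mathtt{A}_\mathup{ref})$ together with the analogous chain for $\lambda_\mathup{max}$, whence
\[
  \frac{K'}{K''}\,\Cond(\mathtt{A}_\mathup{ref})\;\leq\; \Cond(\mathtt{A})\;\leq\; \frac{K''}{K'}\,\Cond(\mathtt{A}_\mathup{ref}).
\]
The task therefore reduces to proving $\Cond(\mathtt{A}_\mathup{ref})=\Theta((N^z)^2)$ for this homogeneous reference.

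To estimate $\Cond(\mathtt{A}_\mathup{ref})$ I would invoke \cref{eq:ref bilinear}. The 2D DCT-2 satisfies a Parseval identity whose weights $4\alpha_{i'}\alpha_{j'}/(N^xN^y)$ match exactly those appearing in \cref{eq:ref bilinear}, so $\mathtt{A}_\mathup{ref}$ is orthogonally similar to the block-diagonal operator with blocks $\{T_{i',j'}\}_{0\leq i'<N^x,\;0\leq j'<N^y}$. With the homogeneous reference, each $T_{i',j'}$ is a scalar multiple of the identity plus the fixed $N^z\times N^z$ tridiagonal matrix $M$ having diagonal $(3,2,\ldots,2,3)$ and off-diagonal $-1$; its eigenvalues are therefore $2(1-\cos(i'\pi/N^x))+2(1-\cos(j'\pi/N^y))+\mu$ as $\mu$ ranges over the eigenvalues of $M$. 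The smallest (resp.\ largest) eigenvalue of $\mathtt{A}_\mathup{ref}$ is thus attained at $(i',j')=(0,0)$ (resp.\ $(N^x-1,N^y-1)$), reducing the problem to understanding the spectrum of $M$.

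The crux of the proof is the explicit diagonalization of $M$. Using the ansatz $v_k=\sin((2k+1)\theta/2)$, the interior rows of $Mv=\lambda v$ force $\lambda=2(1-\cos\theta)$ automatically; the two modified boundary rows then reduce, via product-to-sum identities, to the single condition $\sin(N^z\theta)=0$, which selects $\theta_m=m\pi/N^z$ for $m=1,\ldots,N^z$ and thereby delivers all $N^z$ eigenvalues $2(1-\cos(m\pi/N^z))$. In particular,
\[
  \lambda_\mathup{min}(\mathtt{A}_\mathup{ref}) = 2\bigl(1-\cos(\pi/N^z)\bigr) \in \bigl[4/(N^z)^2,\ \pi^2/(N^z)^2\bigr],\qquad \lambda_\mathup{max}(\mathtt{A}_\mathup{ref})\in [4,12],
\]
giving $\Cond(\mathtt{A}_\mathup{ref})=\Theta((N^z)^2)$; combined with the first paragraph this completes the proof. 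The main obstacle is exactly this last spectral step: the boundary rows of $M$ are neither purely Neumann nor purely Dirichlet because of the $+2$ mass perturbations at each end, so the trigonometric ansatz must be verified carefully against them. Once the condition $\sin(N^z\theta)=0$ drops out, everything else is routine bookkeeping with the reduction above.
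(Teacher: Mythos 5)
Your proof is correct, and its first half coincides with the paper's: both arguments sandwich $\mathtt{A}$ between $K'$ and $K''$ times the unit-parameter reference matrix and then reduce, via the DCT block-diagonalization of \cref{eq:ref bilinear}, to the spectrum of the tridiagonal blocks $T_{i',j'}$. Where you genuinely diverge is in the treatment of the boundary-modified tridiagonal matrix $M$ with diagonal $(3,2,\dots,2,3)$. The paper does \emph{not} diagonalize it: it splits $M = B + C$ with $B$ the Dirichlet Laplacian (whose spectrum is classical) and $C = \Diag(1,0,\dots,0,1)\succeq 0$, which yields the lower bound $\lambda_\mathup{min}(T_{i',j'}) \geq 2(1-\cos\frac{\pi}{N^z+1})$ for free, and then needs a separate Rayleigh-quotient test with the first Dirichlet eigenvector $v^0$ (exploiting that $Cv^0\cdot v^0 = O(|v^0|^2/(N^z)^3)$) to show $\lambda_\mathup{min}$ is also $O((N^z)^{-2})$ from above; the paper likewise argues $\lambda_\mathup{max}(\mathtt{B})\approx 1$ by a separate elementary estimate rather than through the block structure. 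You instead diagonalize $M$ exactly with the shifted-sine ansatz $v_k=\sin((2k+1)\theta/2)$, and your computation is right: the first boundary row is satisfied identically (the ansatz builds in the ghost condition $v_{-1}=-v_0$), the last row gives $2\sin(N^z\theta)\cos(\theta/2)=0$, whose solutions in $(0,\pi]$ are exactly $\theta_m=m\pi/N^z$, producing the $N^z$ distinct eigenvalues $2(1-\cos(m\pi/N^z))$ (one can sanity-check this against $N^z=2,3$). Your route buys explicit constants (roughly $4/\pi^2 \le C_1$ and $C_2\le 3$) and eliminates both of the paper's auxiliary estimates, at the cost of having to verify the boundary rows carefully; the paper's perturbation argument is softer and would survive other positive-semidefinite boundary modifications without recomputing a spectrum. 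Both are complete proofs of the stated two-sided bound.
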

\begin{proof}
  In this proof, we denote $c$ with a subscript a generic positive constant, and utilize the relation $a \approx b$ to represent $c_1 a \leq b \leq c_2 b$.
  If we  find $\mathtt{B}'$ and $\mathtt{B}''$ such that $\mathtt{B}' \lesssim \mathtt{A} \lesssim \mathtt{B}''$, it becomes evident that
  \[
    \frac{\lambda_\mathup{max}(\mathtt{B}')}{\lambda_\mathup{min}(\mathtt{B}'')} \leq \Cond(\mathtt{A}) \leq \frac{\lambda_\mathup{max}(\mathtt{B}'')}{\lambda_\mathup{min}(\mathtt{B}')}.
  \]
  By recalling $K'\leq \tilde{\kappa}_{i-\SmallHalf,j,k}, \tilde{\kappa}_{i,j-\SmallHalf,k}, \tilde{\kappa}_{i,j,k-\SmallHalf} \leq K''$ due to the property of harmonic averages, we can establish $K'\mathtt{B} \lesssim \mathtt{A} \lesssim K''\mathtt{B}$, where $\mathtt{B}$ is the preconditioner matrix by setting all reference parameters as $1$. The remaining task is to estimate $\lambda_\mathup{max}(\mathtt{B})$ and $\lambda_\mathup{min}(\mathtt{B})$.

  By examining \cref{eq:refer bilinear forms} and utilizing the trivial inequality $(a-b)^2\leq 2(a^2+b^2)$, we can readily deduce that $ \mathtt{B}\mathtt{p}\cdot \mathtt{p} \leq c_3 \abs{\mathtt{p}}^2$ holds for any $\mathtt{p}$. Moreover, by choosing a special $\mathtt{p}^*$ that has a value of $1$ on one entry and $0$ on all other entries, we obtain $\mathtt{B}\mathtt{p}^* \cdot \mathtt{p}^* \leq c_4 \abs{\mathtt{p}^*}^2$. Therefore, we can bound $\lambda_\mathup{max}(\mathtt{B})$ from above and below, i.e., $ \lambda_\mathup{max}(\mathtt{B}) \approx 1$.

  The formula of \cref{eq:ref bilinear} implies $\mathtt{B}$ is a block diagonal matrix under DCTs. Let $\hat{\mathtt{p}}$ be the vector determined by $\hat{p}_{i',j',k}$, then the relation $\abs{\mathtt{p}}^2 \approx \abs{\hat{\mathtt{p}}}^2/\RoundBrackets*{N^xN^y}$ holds, which implies that $\lambda_\mathup{min}(\mathtt{B}) \approx \min_{(i',j')}\lambda_\mathup{min}(T_{i',j'})$. In this case, the tridiagonal matrix $T_{i',j'}$ could be expressed as
  \[
    T_{i',j'}=2(1-\cos\frac{i' \pi}{N^x})\mathbb{I} + 2(1-\cos\frac{j' \pi}{N^y})\mathbb{I}
    +\underbrace{\begin{pmatrix}
        2      & -1     & 0      & \cdots & 0      \\
        -1     & 2      & -1     & ~      & \vdots \\
        0      & -1     & 2      & \ddots & 0      \\
        \vdots & ~      & \ddots & \ddots & -1     \\
        0      & \cdots & 0      & -1     & 2
      \end{pmatrix}}_{\coloneqq B}
    +\underbrace{\Diag(1,\ 0,\ \dots\ ,\ 0,\ 1)}_{\coloneqq C}.
  \]
  The eigenvalues and eigenvectors of the matrix $B$ are well-known, given by
  \[
    \lambda_k(B)=2-2\cos\frac{(k+1)\pi}{N^z+1}
  \]
  and
  \[
    v^k=\SquareBrackets*{\sin \RoundBrackets*{\pi\frac{(k+1)}{N^z+1}}, \sin \RoundBrackets*{\pi\frac{2(k+1)}{N^z+1}},\dots, \sin \RoundBrackets*{\pi\frac{N^z(k+1)}{N^z+1}}}\in \Real^{N^z}.
  \]
  Note that the matrix $C$ is positive semi-definite. Then, we have
  \begin{align*}
    \lambda_\mathup{min}(T_{i',j'}) & \geq \lambda_\mathup{min}\RoundBrackets*{2(1-\cos\frac{i' \pi}{N^x})\mathbb{I} + 2(1-\cos\frac{j' \pi}{N^y})\mathbb{I}+B}      \\
                                    & =                                     2\RoundBrackets*{3-\cos\frac{i' \pi}{N^x}-\cos\frac{j' \pi}{N^y}-\cos\frac{\pi}{N^z+1}},
  \end{align*}
  which gives that $\lambda_\mathup{min}(\mathtt{B}) \geq c_5/\RoundBrackets*{N^z}^2$. We can observe that
  \[
    Cv^0 \cdot v^0=\sin^2 \RoundBrackets*{\frac{\pi}{N^z+1}} + \sin^2\RoundBrackets*{\frac{N^z\pi}{N^z+1}} \leq c_6 \frac{\abs{v^0}^2}{(N^z)^3},
  \]
  which leads that $\lambda_\mathup{min}\RoundBrackets{T_{0,0}} \leq T_{0,0}v^0\cdot v^0/\abs{v^0}^2 \leq c_7 /\RoundBrackets*{N^z}^2$. Consequently, we can conclude that $\lambda_\mathup{min}(\mathtt{B}) \approx 1/ \RoundBrackets*{N^z}^2$.
\end{proof}

\subsection{Remarks on implementations}\label{subsec:remarks}
The efficient implementation of \cref{alg:solve ref} relies on two components, the tridiagonal matrix solver and FCTs. Solving a tridiagonal linear system has a computational complexity of $\mathcal{O}(n)$, where $n$ represents the number of unknowns \cite{Golub2013}. Therefore, the overall cost of solving all tridiagonal systems \cref{alg:solve ref} is at most $\mathcal{O}(N^xN^yN^z)$. LAPACK provides a variety of tridiagonal matrix factorization and solving routines \cite{Anderson1999}, and there are highly optimized packages available depending on specific systems. For instance, Intel's MKL, now contained in oneAPI, is optimized for x86 CPUs.
It is important to note that while the complexity of solving tridiagonal systems is $\mathcal{O}(n)$, this does not necessarily imply satisfactory parallel scalability.
Fortunately, those tridiagonal systems are independent \replace{with}{of} each other, allowing us to naturally leverage a lazy-parallel approach (e.g., with openMP) to maximize efficiency.
On the CUDA platform, a routine named \texttt{gtsv2StridedBatch} from cuSPARSE can meet the requirement. Similarly, multiple 2D forward and backward FCTs in \cref{alg:solve ref} can also be accelerated through parallelism due to the looping of $k$. In FFTW3 \cite{Frigo2005} and cuFFT, the advanced feature \texttt{plan\_many} handles stride array layouts and avoids direct iteration over $k$, while the internal mechanism may not be transparent to users.  However, on most of the platforms such as MKL and CUDA, the default FCTs with the \texttt{plan\_many} feature are not available.

Fortunately, FCTs can be derived from FFTs. In this paper, 2D forward and backward Discrete Fourier Transformations (DFT) take the following forms
\begin{align*}
  \hat{w}_{i',j'} & = \sum_{i=0}^{N^x-1} \sum_{j=0}^{N^y-1} w_{i,j} \exp\RoundBrackets*{-2\i\pi\RoundBrackets{\frac{ii'}{N^x}+\frac{jj'}{N^y}}},                       \\
  w_{i,j}         & = \frac{1}{N^xN^y}\sum_{i'=0}^{N^x-1} \sum_{j'=0}^{N^y-1} \hat{w}_{i,j} \exp\RoundBrackets*{2\i\pi\RoundBrackets{\frac{ii'}{N^x}+\frac{jj'}{N^y}}}
\end{align*}
respectively.
Makhoul proposed three-phase algorithms for 2D forward and backward FCTs, denoted by \texttt{FCT} and \texttt{iFCT} respectively, in \cite{Makhoul1980}. Specifically, \texttt{FCT} involves the following steps,
\begin{equation} \label{eq:FCT}
  \texttt{FCT}:\qquad  \underset{\Real}{v_{i,j}} \xrightarrow{\texttt{\quad FCT-Pre \quad}} \underset{\Real}{w_{i,j}} \xrightarrow{\texttt{\quad \texttt{FFT} \quad}} \underset{\Complex}{\hat{w}_{i',j'}} \xrightarrow{\texttt{\quad \texttt{FCT-Post \quad}}} \underset{\Real}{\hat{v}_{i',j'}},
\end{equation}
where the data types ($\Real$ or $\Complex$) are indicated, \texttt{FCT-Pre} is defined as
\[
  w_{i,j}=\left\{
  \begin{alignedat}{3}
    & v_{2i,2j},\quad                && i=0,\dots, \floor*{\frac{N^x-1}{2}}, \quad    && j=0,\dots, \floor*{\frac{N^y-1}{2}},    \\
    & v_{2N^x-2i-1, 2j},\quad        && i=\floor*{\frac{N^x+1}{2}},\dots,N^x-1, \quad && j=0,\dots, \floor*{\frac{N^y-1}{2}},    \\
    & v_{2i, 2N^y-2j-1},\quad        && i=0,\dots,\floor*{\frac{N^x-1}{2}}, \quad     && j=\floor*{\frac{N^y+1}{2}},\dots,N^y-1, \\
    & v_{2N^x-2i-1, 2N^y-2j-1},\quad && i=\floor*{\frac{N^x+1}{2}},\dots,N^x-1, \quad && j=\floor*{\frac{N^y+1}{2}},\dots,N^y-1,
  \end{alignedat}
  \right.
\]
\texttt{FCT-Post} is defined as
\[
  \hat{v}_{i',j'}=\left\{
  \begin{aligned}
     & \frac{1}{2}\Re\CurlyBrackets*{\exp\RoundBrackets*{-\i\pi\frac{i'}{2N^x}} \RoundBrackets*{\exp\RoundBrackets*{-\i\pi\frac{j'}{2N^y}}\hat{w}_{i',j'}+\exp\RoundBrackets*{\i\pi\frac{j'}{2N^y}}\hat{w}_{i',N^y-j'}}}, \\
     & \qquad\qquad j'=1,\dots,N^y-1,                                                                                                                                                                                     \\
     & \Re\CurlyBrackets*{\exp\RoundBrackets*{-\i\pi\frac{i'}{2N^x}}\hat{w}_{i',0}},\quad j'=0,
  \end{aligned}
  \right.
\]
and \texttt{FFT} is FFT in the forward direction. Similarly, \texttt{iFCT} consists of the following steps,
\begin{equation}\label{eq:iFCT}
  \texttt{iFCT}:\qquad  \underset{\Real}{\hat{v}_{i',j'}} \xrightarrow{\texttt{\quad iFCT-Pre \quad}} \underset{\Complex}{\hat{w}_{i',j'}} \xrightarrow{\texttt{\quad \texttt{iFFT} \quad}} \underset{\Real}{w_{i,j}} \xrightarrow{\texttt{\quad \texttt{iFCT-Post \quad}}} \underset{\Real}{v_{i,j}},
\end{equation}
where \texttt{iFCT-Pre} is defined as
\[
  \hat{w}_{i',j'} =\left\{
  \begin{aligned}
     & \exp\RoundBrackets*{\i\pi\frac{i'}{2N^x}}\exp\RoundBrackets*{\i\pi\frac{j'}{2N^y}}\RoundBrackets*{\hat{v}_{i',j'}-\hat{v}_{N^x-i',N^y-j'}-\i\RoundBrackets*{\hat{v}_{N^x-i',j'}+\hat{v}_{i',N^y-j'}}}, \\
     & \qquad\qquad i'\neq 0,\quad j'\neq 0,                                                                                                                                                                  \\
     & \exp\RoundBrackets*{\i\pi\frac{j'}{2N^y}}\RoundBrackets*{\hat{v}_{0,j'}-\i\hat{w}_{0,N^y-j'}},\quad i'=0,\quad j'\neq 0,                                                                               \\
     & \exp\RoundBrackets*{\i\pi\frac{i'}{2N^x}}\RoundBrackets*{\hat{v}_{i',0}-\i\hat{w}_{N^x-i',0}},\quad i'\neq 0, \quad j'=0,                                                                              \\
     & \hat{v}_{0,0},\quad i'=0, \quad j'=0,
  \end{aligned}
  \right.
\]
\texttt{iFCT-Post} is defined as
\[
  v_{i,j}=\left\{
  \begin{alignedat}{3}
    & w_{\frac{i}{2}, \frac{j}{2}},\quad             && i=0,2,\dots, 2\floor*{\frac{N^x-1}{2}},\quad && j=0,2,\dots,2\floor*{\frac{N^y-1}{2}},  \\
    & w_{\frac{i}{2}, N^y-\frac{j+1}{2}},\quad       && i=0,2,\dots,2\floor*{\frac{N^x-1}{2}},\quad  && j=1,3,\dots,2\floor*{\frac{N^y}{2}}-1,  \\
    & w_{N^x-\frac{i+1}{2}, \frac{j}{2}},\quad       && i=1,3,\dots,2\floor*{\frac{N^x}{2}}-1,\quad  && j=0,2,\dots,2\floor*{\frac{N^y-1}{2}},  \\
    & w_{N^x-\frac{i+1}{2}, N^y-\frac{j+1}{2}},\quad && i=1,3,\dots,2\floor*{\frac{N^x}{2}}-1,\quad  && j=1,3,\dots, 2\floor*{\frac{N^y}{2}}-1.
  \end{alignedat}
  \right.
\]
and \texttt{iFFT} is the backward FFT. The complex array $\hat{w}$ in \cref{eq:FCT,eq:iFCT} satisfies the conjugate symmetry
\begin{equation}\label{eq:conjugate sym}
  \hat{w}_{i',j'}=\overline{\hat{w}_{N^x-i',N^y-j'}},\quad j'=\floor*{\frac{N^y}{2}}+1,\dots, N^y-1,
\end{equation}
which has been employed in FFT packages to reduce memory usage by approximately half. Finally, we can efficiently implement FCTs in \cref{alg:solve ref} using the formulas \cref{eq:FCT,eq:iFCT} and specialized FFTs facilitated by the \texttt{plan\_many} functionality. Note that the memory usage by \cref{eq:FCT,eq:iFCT} is nearly optimal when considering the conjugate symmetry in \cref{eq:conjugate sym}.

\section{Numerical experiments}\label{sec:experiments}
The proposed method relies on four essential modules: (1) BLAS for fundamental vector operations, (2) SpMV for efficient handling of sparse matrix and vector multiplications, (3) multiple 2D FCTs specialized for stride array layouts, and (4) tridiagonal system solvers within LAPACK. Hardware vendors such as Intel, AMD, and Nvidia typically provide highly optimized packages that encompass these required functionalities, except for FCTs. Leveraging these packages directly can significantly enhance overall efficiency. We conducted most of our numerical experiments on a system equipped with dual Intel Xeon Gold 6230 CPUs and 1TB of memory, with GPU acceleration enabled using a Quadro RTX 8000 with 48GB of memory. The proposed method is implemented on the CUDA platform, utilizing cuBLAS, cuSPARSE, and cuFFT libraries to provide the aforementioned modules. However, as mentioned in \cref{subsec:remarks}, we still manually craft CUDA device codes to handle specific operations such as \texttt{FCT-Pre}, \texttt{FCT-Post}, \texttt{iFCT-Pre}, and \texttt{iFCT-Post}. Having familiarity with the CUDA programming model can be advantageous for an efficient implementation. For comparison purposes, we also perform some experiments on a system comprising dual Intel Xeon Gold 6140 CPUs and 192GB of memory, but without GPUs. The implementation on this system utilized MKL for BLAS, SpMV, and LAPACK, while multiple FCTs were performed using FFTW3 that is compiled with the optimization flags ``-O3'' and ``--enable-avx512 --enable-openmp''. The source code for our implementation is hosted on GitHub\footnote{\href{https://github.com/Laphet/fct-homo}{https://github.com/Laphet/fct-homo}}.

In the subsequent numerical experiments, we set domain $\Omega$ as a unit cube $(0,1)^3$ and the exit criterion in \cref{alg:pcg} as
\begin{equation}\label{eq:exit}
  \frac{\text{the $l^2$-norm of $\mathtt{r}$}}{\text{the $l^2$-norm of $\mathtt{b}$}} \leq \mathtt{rtol}.
\end{equation}
Throughout the reports, we denote the number of PCG iterations by $\mathtt{iter}$ and the total DoF by $\mathtt{dof}$. The linear programming solver for determining the reference parameters in \cref{alg:refer parameters} is from SciPy \cite{Virtanen2020}. The execution time measurements are obtained by averaging the wall time records of $10$ rounds of experiments under the same configuration.

\subsection{Investigations on the TPFA scheme}
In this subsection, we aim to examine the convergence of the TPFA scheme w.r.t.~$\mathtt{dof}$, especially in the homogenization setting.

We begin by examining a smooth case where the coefficient matrix is defined as
\[
  \mathbb{K}(x,y,z)=\Diag(\cos(\pi y)+2,\ 2 \exp(z),\ 3\cos(\pi x)+4).
\]
We choose a suitable source term to ensure that
\[
  p(x,y,z)=\cos(\pi x)\cos(\pi y)\exp(z)
\]
satisfies the equation $-\Div(\mathbb{K}\nabla p)=f$. The numerical results corresponding to different $\mathtt{dof}$ and $\mathtt{rtol}$ are presented in \cref{tab:smooth}, where $L^2$\!-error represents the difference between numerical and exact solutions in $L^2(\Omega)$-norm, while the integration over $\Omega$ here is obtained from the midpoint quadrature rule for simplicity. From \cref{tab:smooth}, we can observe that under different $\mathtt{rtol}$, $\mathtt{iter}$ remains almost unchanged w.r.t.~$\mathtt{dof}$. This fact reveals that the dependency of $\mathtt{dof}$ for $\Cond(\mathtt{A})$, as demonstrated by \cref{prop:cond}, is eliminated by the proposed preconditioner. From the last row that $\mathtt{rtol}=\num{1.0e-9}$, we can see that the $L^2$\!-error decreases by a factor of $4$, which indicates a $h^2$ convergence rate in $L^2(\Omega)$-norm. Therefore, the TPFA scheme achieves the same level of accuracy as finite element methods for smooth problems. However, this smooth setting does not align with the homogenization background, which primarily focuses on heterogeneous media with discontinuous or \replace{piecewisely}{piecewise} constant coefficient profiles.

\begin{table}[!ht]
  \footnotesize
  \caption{Numerical results of a smooth model, \add{where $L^2$\!-error represents the difference between numerical and exact solutions in $L^2(\Omega)$-norm}.} \label{tab:smooth}
  \centering
  \makegapedcells
  \begin{tabular}{c c c c c c c c c c c}
    \hline
    \multirow{2}{*}{$\mathtt{rtol}$} & \multicolumn{2}{c}{$\mathtt{dof}=32^3$} & \multicolumn{2}{c}{$\mathtt{dof}=64^3$} & \multicolumn{2}{c}{$\mathtt{dof}=128^3$} & \multicolumn{2}{c}{$\mathtt{dof}=256^3$} & \multicolumn{2}{c}{$\mathtt{dof}=512^3$}                                                                                       \\
    \cline{2-11}
                                     & $L^2$\!-error                           & $\mathtt{iter}$                         & $L^2$\!-error                            & $\mathtt{iter}$                          & $L^2$\!-error                            & $\mathtt{iter}$ & $L^2$\!-error & $\mathtt{iter}$ & $L^2$\!-error & $\mathtt{iter}$ \\
    \hline
    \num{1.0e-5}                     & \num{3.85e-4}                           & \num{14}                                & \num{9.54e-5}                            & \num{14}                                 & \num{2.41e-5}                            & \num{14}        & \num{1.27e-5} & \num{14}        & \num{1.38e-5} & \num{14}        \\
    \num{1.0e-6}                     & \num{3.85e-4}                           & \num{17}                                & \num{9.63e-5}                            & \num{17}                                 & \num{2.44e-5}                            & \num{17}        & \num{6.69e-6} & \num{17}        & \num{2.65e-6} & \num{17}        \\
    \num{1.0e-7}                     & \num{3.85e-4}                           & \num{20}                                & \num{9.61e-5}                            & \num{20}                                 & \num{2.40e-5}                            & \num{20}        & \num{5.94e-6} & \num{20}        & \num{1.42e-6} & \num{20}        \\
    \num{1.0e-8}                     & \num{3.85e-4}                           & \num{22}                                & \num{9.61e-5}                            & \num{23}                                 & \num{2.40e-5}                            & \num{23}        & \num{6.01e-6} & \num{23}        & \num{1.51e-6} & \num{23}        \\
    \num{1.0e-9}                     & \num{3.85e-4}                           & \num{25}                                & \num{9.61e-5}                            & \num{26}                                 & \num{2.40e-5}                            & \num{26}        & \num{6.61e-6} & \num{26}        & \num{1.50e-6} & \num{26}        \\
    \hline
  \end{tabular}
\end{table}

We then consider a heterogeneous configuration, taking
\[
  \mathbb{K}(x,y,z)=\begin{cases}
    \kappa^\mathup{inc}\mathbb{I}, & \sqrt{(x-\frac{1}{2})^2+(y-\frac{1}{2})^2+(z-\frac{1}{2})^2}\leq \frac{1}{4}, \\
    \mathbb{I},                    & \text{otherwise}.
  \end{cases}
\]
Intuitively, this configuration represents a ball embedded within the RVE. The discretized coefficient profiles $\kappa^x_{i,j,k}$, $\kappa^y_{i,j,k}$ and $\kappa^z_{i,j,k}$ are determined based on whether the element center lies within the ball. Since $\kappa^z_\mathup{eff}$ calculated by \cref{eq:effective formula dis} is of utmost importance in practice, we illustrate the convergence of $\kappa^z_\mathup{eff}$ w.r.t.~$\mathtt{dof}$ under different $\kappa^\mathup{inc}$ in \cref{fig:eff convergence dof}, where $\mathtt{rtol}$ is fixed at $\num{1.0e-9}$. Analyzing \cref{fig:eff convergence dof},  we can observe convergence rate of $\kappa^z_\mathup{eff}$ is nearly linear to $h$. A mathematical explanation for this convergence rate is beyond the scope of this paper; however, several theoretical studies exist for other discretization schemes and problem settings \cite{Ye2023,Schneider2022,Schneider2023}. It is also interesting to note that the convergence behavior differs for $\kappa^\mathup{inc} < 1$ and $\kappa^\mathup{inc} > 1$. Specially, $\kappa_\mathup{eff}^z$ increases in \cref{fig:eff convergence dof a} w.r.t.~$\mathtt{dof}$ while it decreases in \cref{fig:eff convergence dof b}.

\begin{figure}[!ht]
  \centering
  \begin{subfigure}[b]{0.42\textwidth}
    \includegraphics[width=\textwidth]{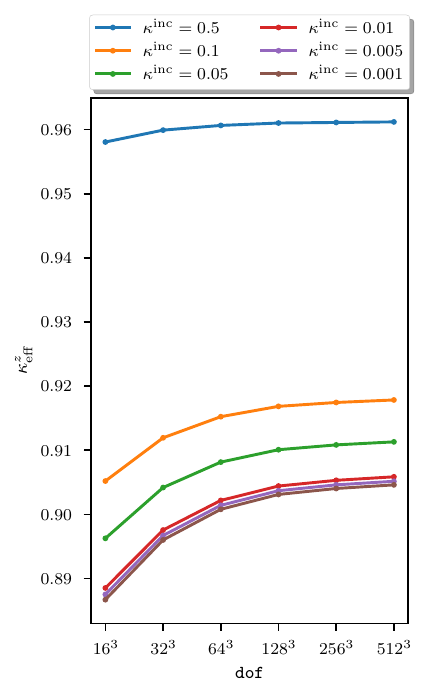}
    \caption{}\label{fig:eff convergence dof a}
  \end{subfigure}
  \begin{subfigure}[b]{0.42\textwidth}
    \includegraphics[width=\textwidth]{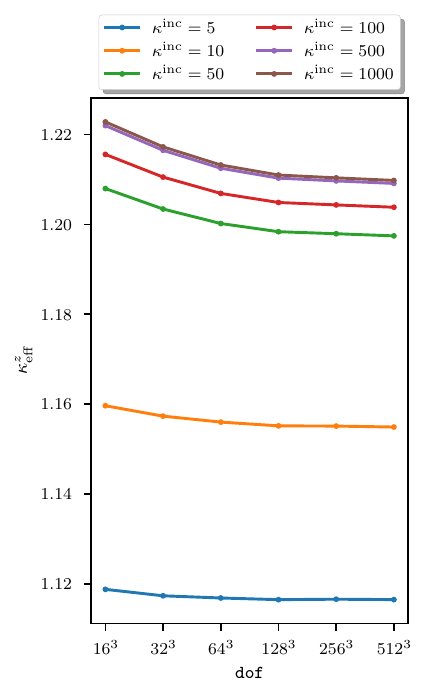}
    \caption{}\label{fig:eff convergence dof b}
  \end{subfigure}
  \caption{Convergences of $\kappa^z_\mathup{eff}$ with  respect to $\mathtt{dof}$ in the center-ball RVE configuration, where $\kappa^\mathup{inc} < 1$ in (a) and $\kappa^\mathup{inc} > 1$ in (b).}\label{fig:eff convergence dof}
\end{figure}

Various convergence tests for FFT-based homogenization schemes are extensively discussed in \cite{Schneider2021}. In this paper, we adhere to the conventional rule and employ the exit criterion given by \cref{eq:exit}. We present the results by selecting $\mathtt{rtol}$ ranging from $\num{1.0e-5}$ to $\num{1.0e-9}$ and different $\kappa^\mathup{inc}$ in \cref{tab:ball center rtol}. It can be concluded that if $\mathtt{rtol}$ is less than $\num{1.0e-5}$, the calculated $\kappa^z_\mathup{eff}$ remains constant, up to four significant digits, across all tested $\kappa^\mathup{inc}$ values. Furthermore, we can observe that the convergence of PCG deteriorates as the \alert{contrast ratio, defined as $\max\{ \kappa^\mathup{inc}, 1/\kappa^\mathup{inc}\}$}, increases. Constructing contrast stable preconditioners usually involves spectral problems and adaptivity techniques \cite{Dolean2015,Ye2024,Ye2024a}. However, those preconditioners in implementations, based on our experience, are complicated for GPU programming and prevail only on some extreme high contrast scenarios ($\geq \num{1.0e+5}$).

\begin{table}[!ht]
  \footnotesize
  \caption{The influence of $\mathtt{rtol}$ and $\kappa^\mathup{inc}$ to $\kappa^z_\mathup{eff}$ and $\mathtt{iter}$ in the center-ball RVE configuration, where $\mathtt{dof}=512^3$.} \label{tab:ball center rtol}
  \centering
  \makegapedcells
  \begin{tabular}{c c c c c c c c c c c c c}
    \hline
    \multirow{2}{*}{$\mathtt{rtol}$} & \multicolumn{2}{c}{$\kappa^\mathup{inc}=0.001$} & \multicolumn{2}{c}{$\kappa^\mathup{inc}=0.01$} & \multicolumn{2}{c}{$\kappa^\mathup{inc}=0.1$} & \multicolumn{2}{c}{$\kappa^\mathup{inc}=10$} & \multicolumn{2}{c}{$\kappa^\mathup{inc}=100$} & \multicolumn{2}{c}{$\kappa^\mathup{inc}=1000$}                                                                                                                                     \\
    \cline{2-13}
                                     & $\kappa_\mathup{eff}^z$                         & $\mathtt{iter}$                                & $\kappa_\mathup{eff}^z$                       & $\mathtt{iter}$                              & $\kappa_\mathup{eff}^z$                       & $\mathtt{iter}$                                & $\kappa_\mathup{eff}^z$ & $\mathtt{iter}$ & $\kappa_\mathup{eff}^z$ & $\mathtt{iter}$ & $\kappa_\mathup{eff}^z$ & $\mathtt{iter}$ \\
    \hline
    \num{1.0e-5}                     & \num{0.905}                                     & \num{9}                                        & \num{0.906}                                   & \num{10}                                     & \num{0.918}                                   & \num{7}                                        & \num{1.155}             & \num{9}         & \num{1.204}             & \num{18}        & \num{1.210}             & \num{43}        \\
    \num{1.0e-6}                     & \num{0.905}                                     & \num{15}                                       & \num{0.906}                                   & \num{12}                                     & \num{0.918}                                   & \num{9}                                        & \num{1.155}             & \num{12}        & \num{1.204}             & \num{24}        & \num{1.210}             & \num{54}        \\
    \num{1.0e-7}                     & \num{0.905}                                     & \num{21}                                       & \num{0.906}                                   & \num{17}                                     & \num{0.918}                                   & \num{11}                                       & \num{1.155}             & \num{14}        & \num{1.204}             & \num{30}        & \num{1.210}             & \num{69}        \\
    \num{1.0e-8}                     & \num{0.905}                                     & \num{31}                                       & \num{0.906}                                   & \num{24}                                     & \num{0.918}                                   & \num{14}                                       & \num{1.155}             & \num{17}        & \num{1.204}             & \num{37}        & \num{1.210}             & \num{85}        \\
    \num{1.0e-9}                     & \num{0.905}                                     & \num{38}                                       & \num{0.906}                                   & \num{29}                                     & \num{0.918}                                   & \num{17}                                       & \num{1.155}             & \num{20}        & \num{1.204}             & \num{44}        & \num{1.210}             & \num{100}       \\
    \hline
  \end{tabular}
\end{table}

\subsection{Stability comparisons with standard preconditioners}
In this subsection, we assess the stability of the proposed preconditioner in comparison to the Incomplete Cholesky (ICC) and Symmetric Successive Over-Relaxation (SSOR) preconditioners \cite{Saad2003}. Both SSOR and ICC preconditioners can be implemented conveniently using routines provided by cuSPARSE. However, we found that for problems with $\mathtt{dof}=512^3$, neither SSOR nor ICC can be executed successfully due to exceeding the available GPU memory (48 GB). Therefore, the relatively small memory usage of the proposed preconditioner is an additional advantage worth noting.

We consider the center-ball RVE configuration introduced in the previous subsection. The convergence histories corresponding to the proposed preconditioner (referred to as FCT), SSOR with $\omega \in \CurlyBrackets{0.5, 1.0, 1.5}$, and ICC for different $\mathtt{dof}$ and $\kappa^\mathup{inc}$ are demonstrated in \cref{fig:convergen history ball center}, where the y-axis in each subplot represents the relative residual, \add{and x-axis represents a logarithmic scale of iteration steps to enhance visual clarity}. \add{We also conduct a test using CG without preconditioning as a reference, denoted by ``\texttt{no-pc}'' in the \cref{fig:convergen history ball center}.} Upon initial inspection, it is evident that the proposed preconditioner offers a significant advantage over the others. Furthermore, the convergence of SSOR and ICC deteriorates as $\mathtt{dof}$ increases, indicating that preconditioning is unable to eliminate the dependence of $\Cond(\mathtt{\mathtt{A}})$ on $\mathtt{dof}$. \add{Moreover, it is worth noting that without preconditioning, CG iterations exhibit slow convergence and fail to meet the convergence criterion within $1024$ iterations in all $\mathtt{dof}=400^3$ cases.}

\begin{figure}[!ht]
  \centering
  \includegraphics[width=\textwidth]{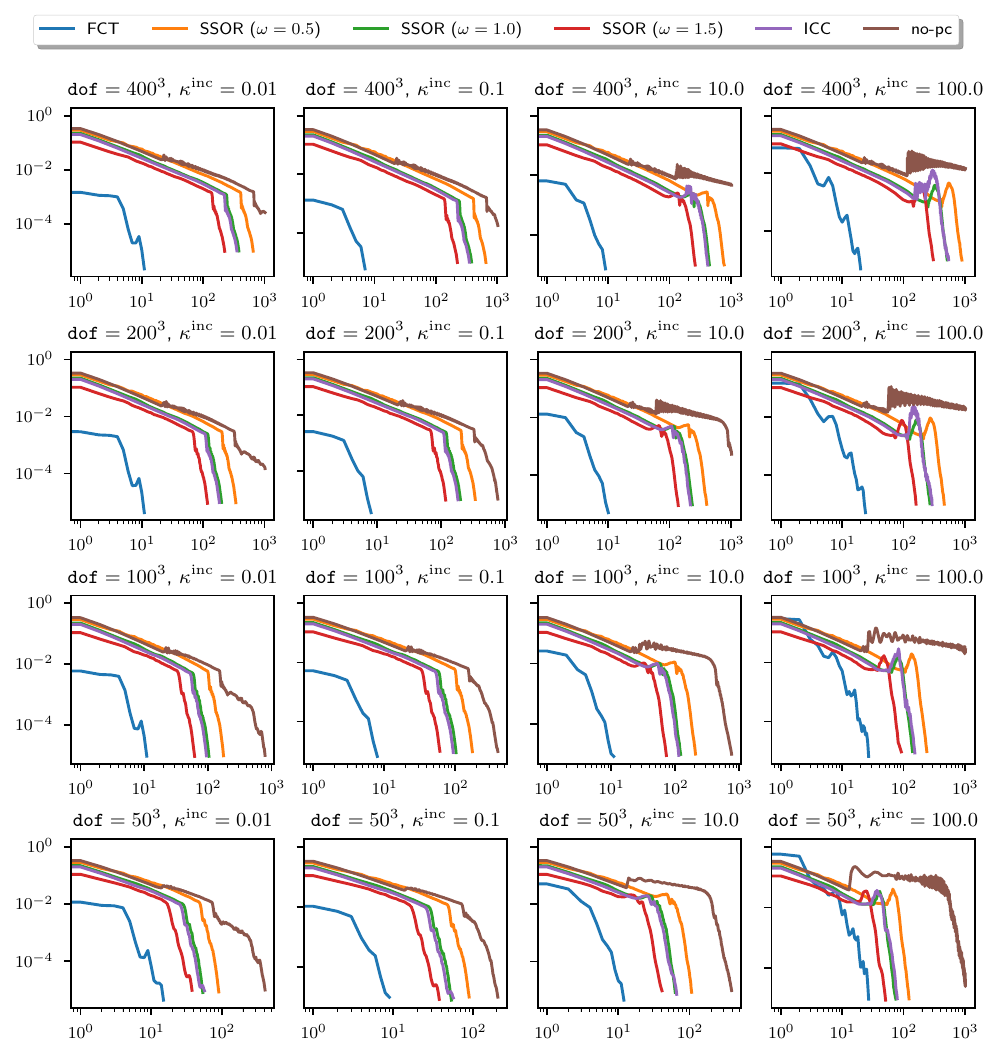}
  \caption{Convergence histories for different preconditioners in the center-ball RVE configuration, \add{where in each plot, the x-axis represents the PCG iteration step and the y-axis represents the relative residual}.}\label{fig:convergen history ball center}
\end{figure}

\begin{figure}[!ht]
  \centering
  \begin{subfigure}[b]{0.32\textwidth}
    \includegraphics[width=\textwidth]{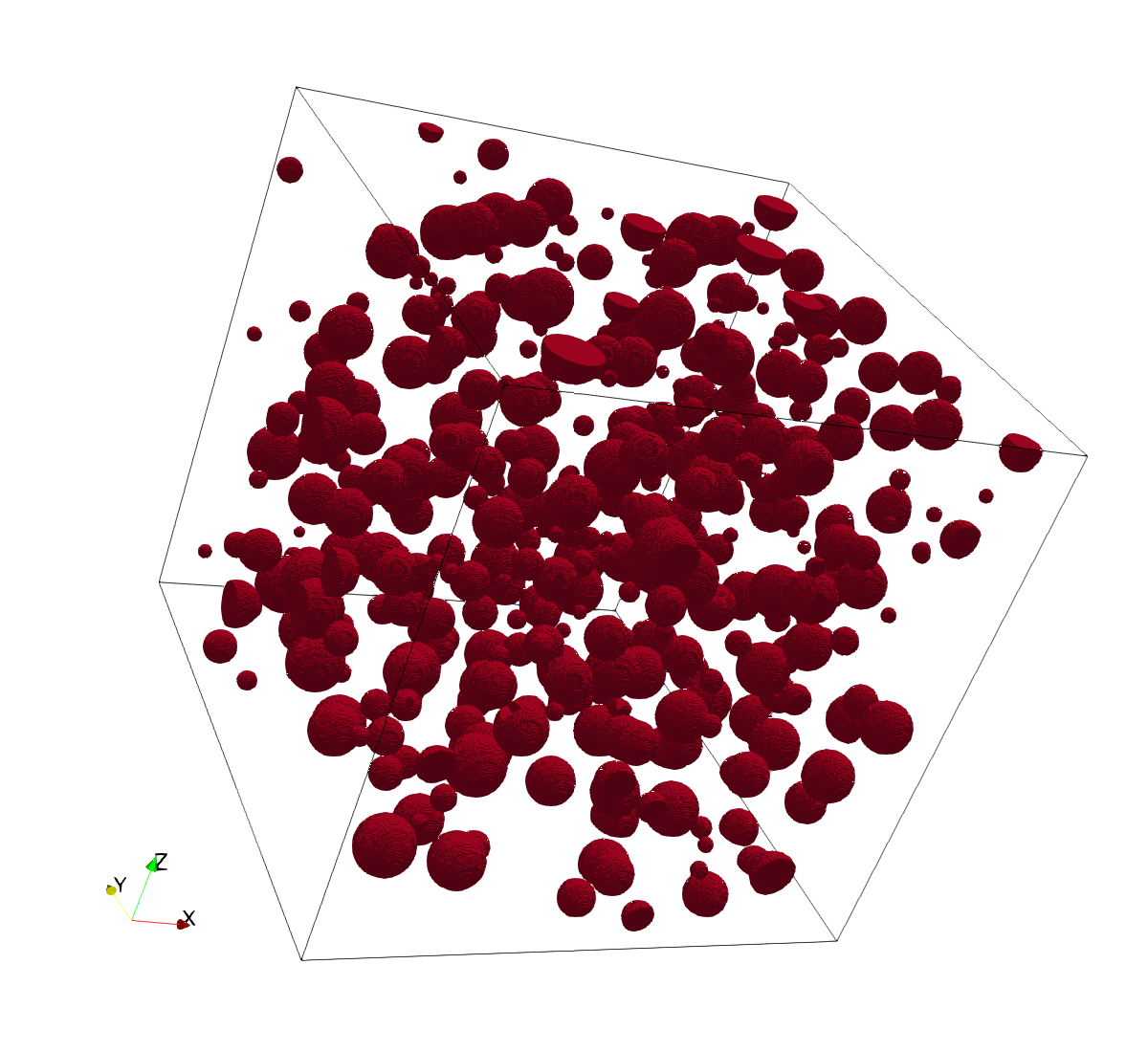}
    \caption{}\label{fig:balls-a}
  \end{subfigure}
  \begin{subfigure}[b]{0.32\textwidth}
    \includegraphics[width=\textwidth]{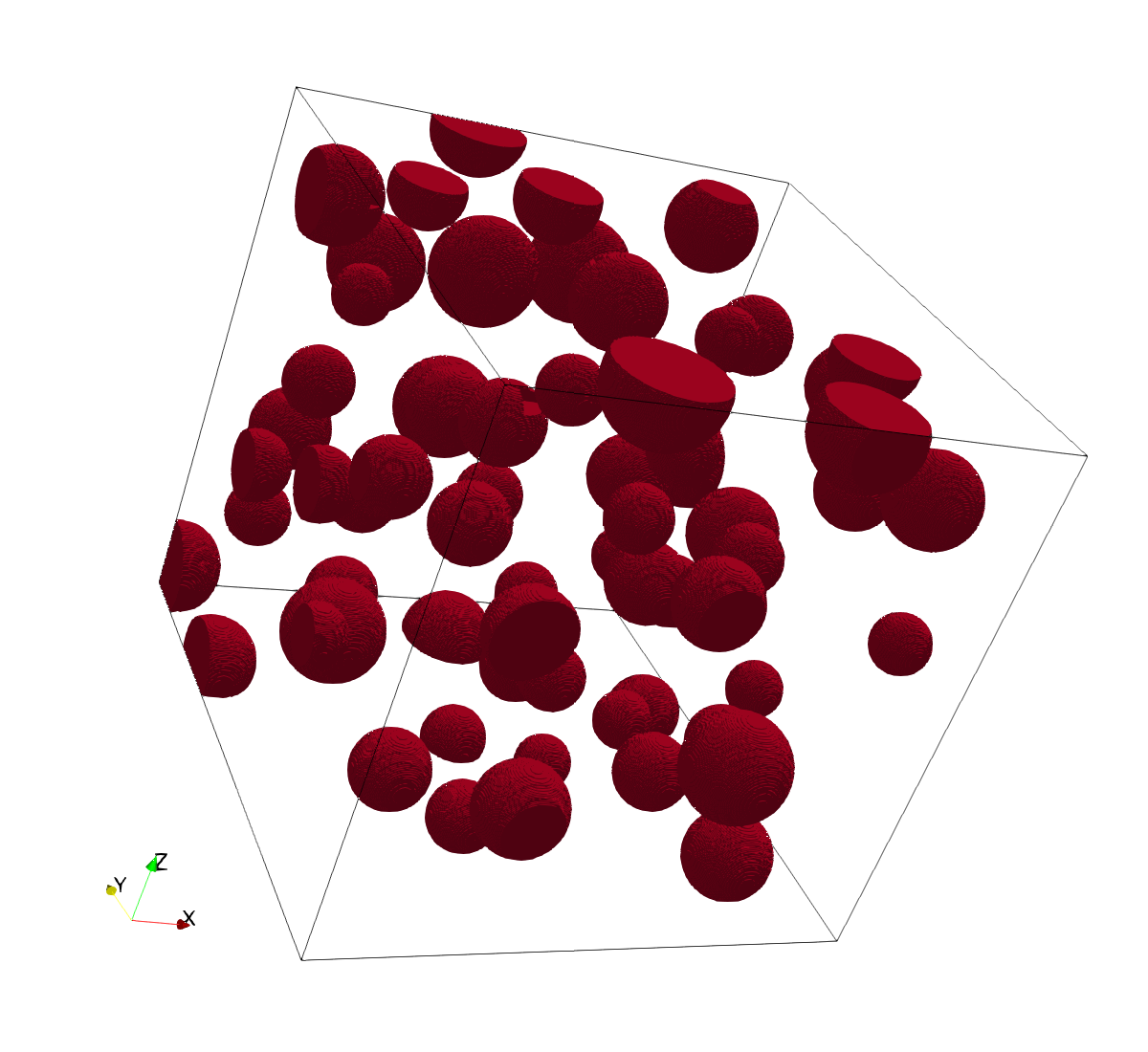}
    \caption{}\label{fig:balls-b}
  \end{subfigure}
  \begin{subfigure}[b]{0.32\textwidth}
    \includegraphics[width=\textwidth]{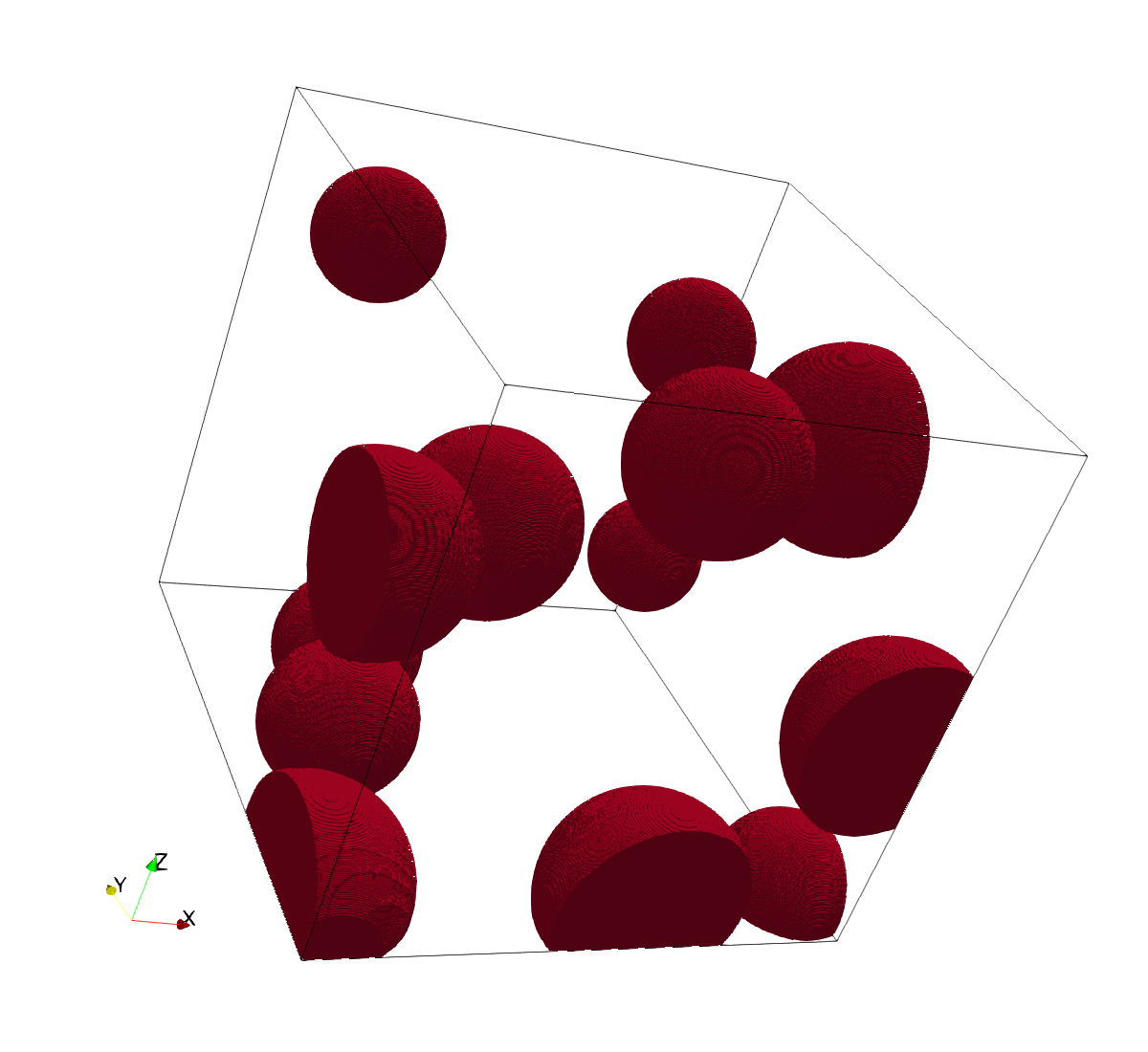}
    \caption{}\label{fig:balls-c}
  \end{subfigure}
  \caption{Three heterogeneous RVE configurations where balls (inclusions) with different sizes are randomly distributed, referred to as config-(a), config-(b), and config-(c) respectively.}\label{fig:ball packed config}
\end{figure}

To provide more compelling examples, we construct and visualize three heterogeneous RVE configurations where balls are randomly distributed, as shown in \cref{fig:ball packed config}. These configurations are referred to as config-(a), config-(b), and config-(c). The discretized coefficient profiles $\kappa^x_{i,j,k}$, $\kappa^y_{i,j,k}$, and $\kappa^z_{i,j,k}$ are obtained using the same procedure as the center-ball configuration, where we set $\mathtt{dof}$ to be either $400^3$ or $512^3$. The convergence histories corresponding to different preconditioners are illustrated in \cref{fig:history balls packed}, where $\mathtt{dof}$ is fixed at $400^3$ due to the limitation that SSOR and ICC cannot be executed within the available GPU memory for the $\mathtt{dof}=512^3$ case. From \cref{fig:history balls packed}, we observe that in some cases, SSOR and ICC fail to reach the convergence criterion within $1000$ iterations. By comparing this with \cref{fig:convergen history ball center} for a simple RVE configuration, we can conclude that the convergence of SSOR and ICC is dependent on the heterogeneity of the coefficients. Meanwhile, the proposed preconditioner shows no stability loss for models with varying levels of heterogeneity. This can be explained by \cref{lem:cond}, which states that the condition number of the preconditioned system can be bounded by the contrast ratios.

\begin{figure}[!ht]
  \centering
  \includegraphics[width=\textwidth]{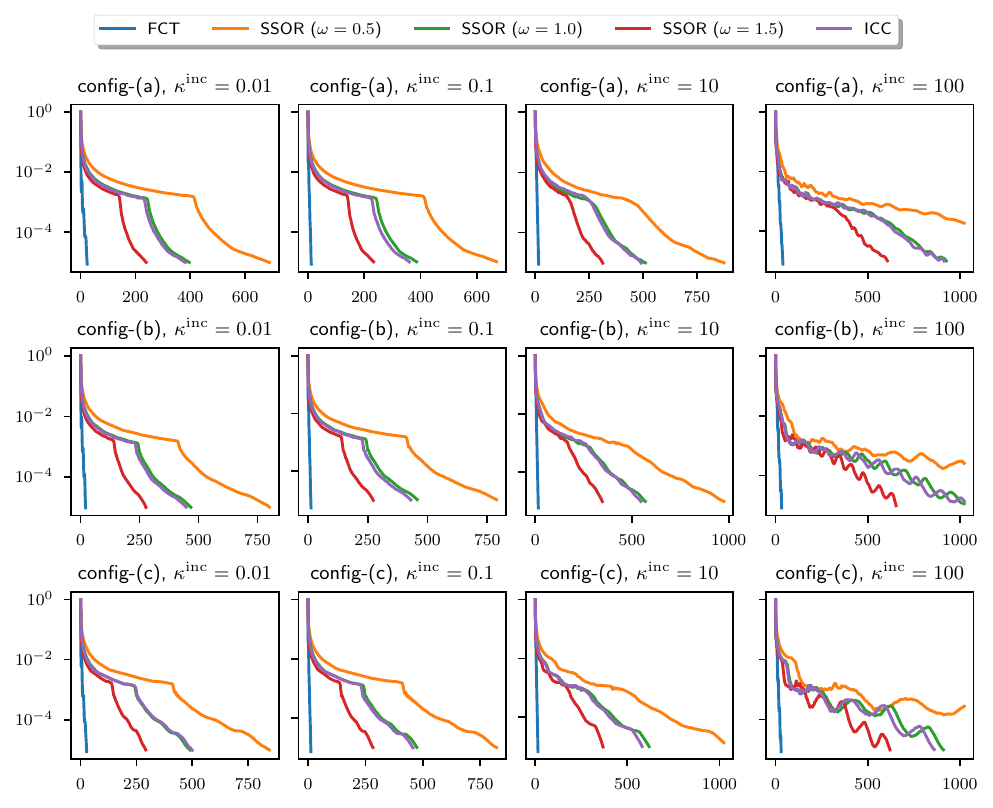}
  \caption{Convergence histories for different preconditioners in the three random RVE configurations, where $\mathtt{dof}$ is fixed as $400^3$, \add{and in each plot, the x-axis represents the PCG iteration step and the y-axis represents the relative residual}.}\label{fig:history balls packed}
\end{figure}

\add{
  Before closing this subsection, we would like to comment on the computational cost of ICC and SSOR in comparison to the proposed preconditioner in one PCG iteration. Once the data for ICC and SSOR are prepared, the computational overhead of ICC and SSOR mainly depends on the sparse triangular system solvers. By exploiting the sparsity of the triangular system, the theoretical cost scales proportionally to the number of DoF, i.e., $\mathcal{O}(N^xN^yN^z)$. However, this analysis does not account for the potential acceleration through parallelism. Due to the intrinsic sequential nature, parallelizing sparse triangular solvers is challenging, making it a prominent topic in the field of parallel computing \cite{Jin2020}. As previously mentioned, our implementation of ICC and SSOR cannot handle the $\mathtt{dof}=512^3$ case due to the GPU memory limitation, we may infer that several sacrifices of space complexity for time complexity are adopted by the solvers behind the scenes \cite{Naumov2011}. On the contrary, FFT algorithms inherently lend themselves to parallelization through the divide-and-conquer strategy, with practical efficiency well-supported by hardware vendors. We also emphasize that tridiagonal systems in the proposed preconditioner are all of size $N^z$ and are independent of each other. While it is possible to merge these small-size tridiagonal systems into a larger one and solve it in parallel, our experience suggests that such a parallelization strategy performs poorly compared to the current implementation, despite the identical theoretical computational cost. This observation also supports our conclusion: although a primary analysis suggests that the proposed preconditioner shares a similar computational cost as ICC and SSOR within a PCG iteration, its practical efficiency is notably superior.
}

\subsection{Performance tests of different implementations}
In this subsection, we evaluate the performance of the two aforementioned platforms, CUDA and MKL-FFTW3, by measuring the wall time. For MKL-FFTW3, parallel acceleration is achieved by using appropriate openMP threads.

Solving the preconditioner system is an essential module in \cref{alg:pcg}, and we first examine the performance of two platforms on a single round of $\mathtt{A}_\mathup{ref}^{-1}\mathtt{r}$. In realistic implementations, it is advisable to introduce two phases, namely preparation and execution, to maximize efficiency. Because the preconditioner system solver needs to be repeatedly executed, operations that are performed only once should be arranged into the preparation phase. Specially, on MKL-FFTW3, the preparation phase encompasses the multiple FCT initialization and tridiagonal matrix factorizations. FFTW3 offers various ``plan'' options to provide flexibility in different situations. The underlying principle of FFTW3 is investing more time in the preparation leads to time savings during FCT/FFT executions. We report the results of using two plans, referred to as ``Plan(P)''  and ``Plan(E)'' \footnote{``Plan(P)'' and ``Plan(E)'' stand for ``\texttt{FFTW\_PATIENT}'' and ``\texttt{FFTW\_ESTIMATE}'' respectively, which are planning-rigor flags used by FFTW3.} in \cref{fig:CPU precond time}, where $\mathtt{dof}$ ranges from $64^3$ to $512^3$. According to the left part of \cref{fig:fftw3 precond solver}, Plan(P) requires significantly more time during the preparation phase. For instance, in the case of $\mathtt{dof}=512^3$, the elapsed time for Plan(P) is almost $\qty{170}{s}$, whereas for Plan(E) it is less than $\qty{2}{s}$. However, reading from the right part of \cref{fig:fftw3 precond solver}, the execution time savings achieved by Plan(P) may not be substantial. For $\mathtt{dof}=512^3$, the average elapsed time for Plan(E) is around $\qty{2.5}{s}$ and for Plan(P) is approximately $\qty{1.5}{s}$. Therefore, it is reasonable to consider Plan(E) in most cases. The same test configuration is also performed on CUDA, and the results are presented in \cref{fig:CPU precond time}. We can observe that the preparation time is almost the same for $\mathtt{dof}=64^3$ to $256^3$, which does not follow the expected linear scaling law. This may be attributed to that the time consumed in transferring data from the main memory to the GPU memory is also included in the preparation phase. The execution time is $\qty{1.8}{ms}$, $\qty{7.0}{ms}$, $\qty{91.0}{ms}$ and $\qty{883.9}{ms}$ respectively, exhibiting a linear, albeit not perfect, increase. We can also observe an acceleration achieved by using GPUs over FFTW3, even when employing an aggressive plan. It is worth noting that FCTs are not out-of-the-box functionalities in the CUDA platform. With the involvement of specialists, we can expect even more significant performance improvements.

\begin{figure}[!ht]
  \centering
  \begin{subfigure}[b]{0.59\textwidth}
    \includegraphics[width=\textwidth]{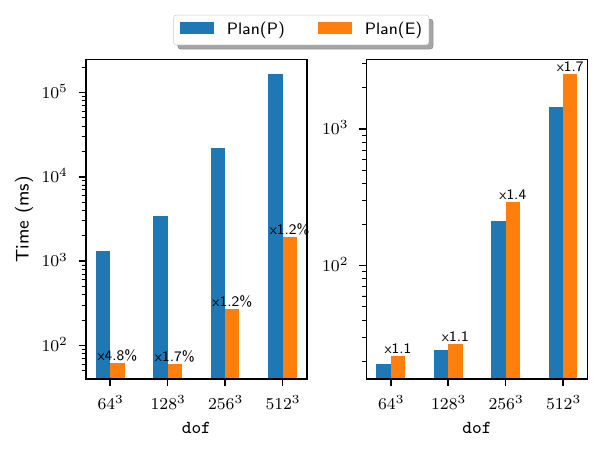}
    \caption{}\label{fig:fftw3 precond solver}
  \end{subfigure}
  \begin{subfigure}[b]{0.4\textwidth}
    \includegraphics[width=\textwidth]{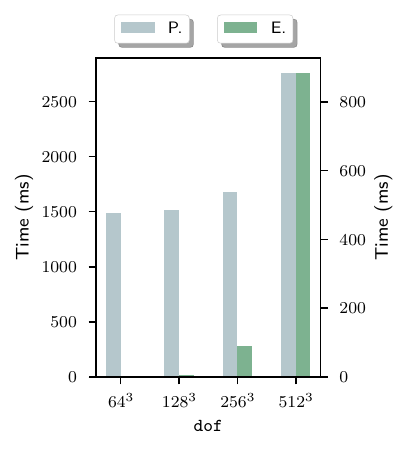}
    \caption{}\label{fig:cuda precond solver}
  \end{subfigure}
  \caption{Preparation and execution time in the preconditioner system solver on MKL-FFTW3 and CUDA: (a) MKL-FFTW3 with \textit{left} preparation and \textit{right} execution; (b) CUDA with ``P.'' for preparation and ``E.'' for execution.}\label{fig:CPU precond time}
\end{figure}

We then proceed to compare MKL-FFTW3 and CUDA in the PCG solver. We target the ball-center configuration, which allows adjusting $\mathtt{dof}$. The total elapsed time to obtain the PCG solution, including the preparation phase, and $\mathtt{iter}$ corresponding to different $\mathtt{dof}$ and $\kappa^\mathup{inc}$ are presented in \cref{fig:cuda cg solver}, with $\mathtt{rtol}$ fixed at $\num{1.0e-5}$ for all cases. We can observe that for all coefficient profiles, $\mathtt{iter}$ remains less than $50$. Although, $\mathtt{iter}$ is slightly larger in the cases of $\kappa^\mathup{inc}=100$ and $1000$.  Moreover, there is a consistent acceleration of the CUDA platform over MKL-FFTW3, as demonstrated in \cref{fig:cuda cg solver}. According to the raw data, in the case of $\mathtt{dof}=512^3$ and $\kappa^\mathup{inc}=1000$, the average elapsed time is around $\qty{108}{s}$ on MKL-FFTW3, while it is approximately $\qty{22}{s}$ on CUDA, showcasing a $5$-fold acceleration in this scenario.

We also examine config-(b) from \cref{fig:ball packed config}, while $\mathtt{dof}$ is now set to $512^3$. The results of the total elapsed time to obtain the PCG solution and $\mathtt{iter}$ regarding different $\kappa^\mathup{inc}$ are illustrated in \cref{fig:cuda-cpu cg solver}. We can confirm that at least a $5$-fold acceleration is achieved across all test cases on the CUDA platform. Even without GPUs, the proposed method can solve models with $\mathtt{dof}=512^3$ in less than $\qty{90}{s}$. We acknowledge that directly comparing wall time records from different methods may be unfair, given the diversities in hardware, platforms, and test configurations. However, based on several available records in \cite{Yang2022,Liu2019}, the proposed method has delivered compelling performance.
Once again, our method relies on four modules that are common facilities across all computing disciplines, making it easier to leverage advancements in new hardware and platforms.

\begin{figure}[!ht]
  \centering
  \begin{subfigure}[b]{0.49\textwidth}
    \includegraphics[width=\textwidth]{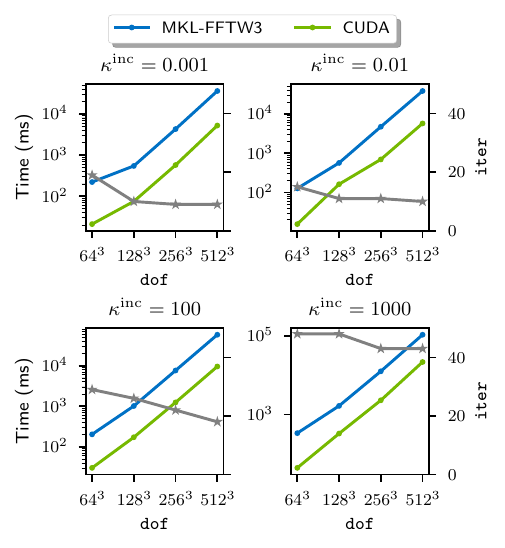}
    \caption{}  \label{fig:cuda cg solver}
  \end{subfigure}
  \begin{subfigure}[b]{0.49\textwidth}
    \includegraphics[width=\textwidth]{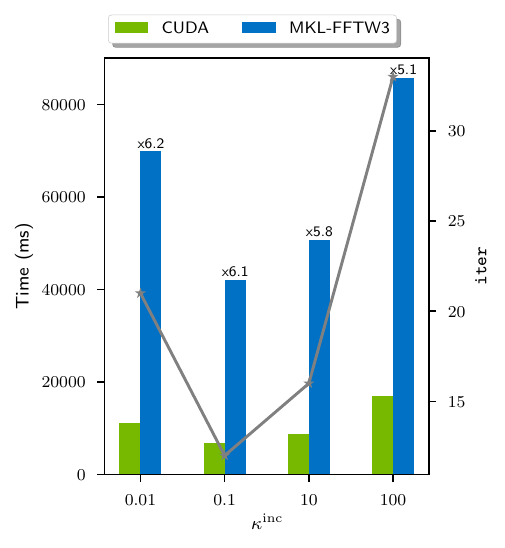}
    \caption{}  \label{fig:cuda-cpu cg solver}
  \end{subfigure}
  \caption{The total elapsed time to obtain the PCG solution on the MKL-FFTW3 and CUDA platforms: (a) the center-ball configuration; (b) config-(b) demonstrated in \cref{fig:ball packed config} with $\mathtt{dof}=512^3$.}
\end{figure}

\subsection{Other experiments}
In this subsection, we offer two numerical experiment designs. The first one aims to validate the effectiveness of deriving reference parameters through linear programming, as stated in \cref{alg:refer parameters}. The second one focuses on investigating the influence and performance of using lower precision functionalities, i.e., changing the data type from the double to the single precision floating point format.

In the first experiment, the coefficient field $\mathbb{K}(\bm{x})$ is constructed from periodically duplicating a cell that is shown in \cref{fig:channels-cell}. Specifically, within the dilated \replace{periodical}{periodic} cell $(0,1)^3$, $\hat{\mathbb{K}}(\bm{y})$ is defined by
\[
  \hat{\mathbb{K}}(\bm{y})=\begin{cases}
    \Diag(2^\Psi,\ 5^\Psi,\ 10^\Psi), & (0,1)\!\!\times\!\!(\frac{3}{8},\frac{5}{8})\!\!\times\!\!(\frac{3}{8},\frac{5}{8}) \cup  (\frac{3}{8},\frac{5}{8})\!\!\times\!\!(0,1)\!\!\times\!\!(\frac{3}{8},\frac{5}{8}) \cup (\frac{3}{8},\frac{5}{8})\!\!\times\!\!(\frac{3}{8},\frac{5}{8})\!\!\times\!\!(0,1), \\
    \Diag(0.01,\ 0.1,\ 1),            & \text{otherwise},
  \end{cases}
\]
where a positive parameter $\Psi$ controls anisotropy. The coefficient field $\mathbb{K}(\bm{x})$ is then determined by $\hat{\mathbb{K}}(\CurlyBrackets{N\bm{x}})$, where $\CurlyBrackets{\cdot}$ represents the fractional part. \Cref{fig:channels-domain} is an illustration of the RVE configuration with $N=4$. It is worth noting that such a long channel configuration with high contrast and strong anisotropy poses challenges for traditional domain decomposition and algebraic multigrid preconditioners \cite{Dolean2015,Ye2024}. For testing PCG solvers, we choose $\mathtt{dof}=512^3$ and $N=64$. To determine the reference parameters, we employ two methods: the first method, denoted as ``one'', assigns all parameters a value of $1$; the second method, referred to as ``opt'', solves the linear programming problem described in \cref{alg:refer parameters}. The convergence histories regarding different $\Psi$  using both ``one'' and ``opt'' are presented in \cref{fig:channels-history}. As expected, the convergence deteriorates as $\Psi$ increases. However, for $\Psi=2$ or $3$, there is a significant improvement with ``opt'' compared to ``one'', while only a slight advancement is observed for $\Psi=1$. Therefore, in situations with strong anisotropy, it is advisable to optimize the reference parameters to achieve faster convergence. In classical FFT-based homogenization methods for elasticity, typically two adjustable parameters can be determined in advance using simple rules \cite{Vinogradov2008}. The proposed \cref{alg:refer parameters} extends these rules to the specific discretization scheme and provides flexibility for tackling challenging anisotropic problems.

\begin{figure}[!ht]
  \centering
  \begin{subfigure}[b]{0.33\textwidth}
    \includegraphics[width=\textwidth]{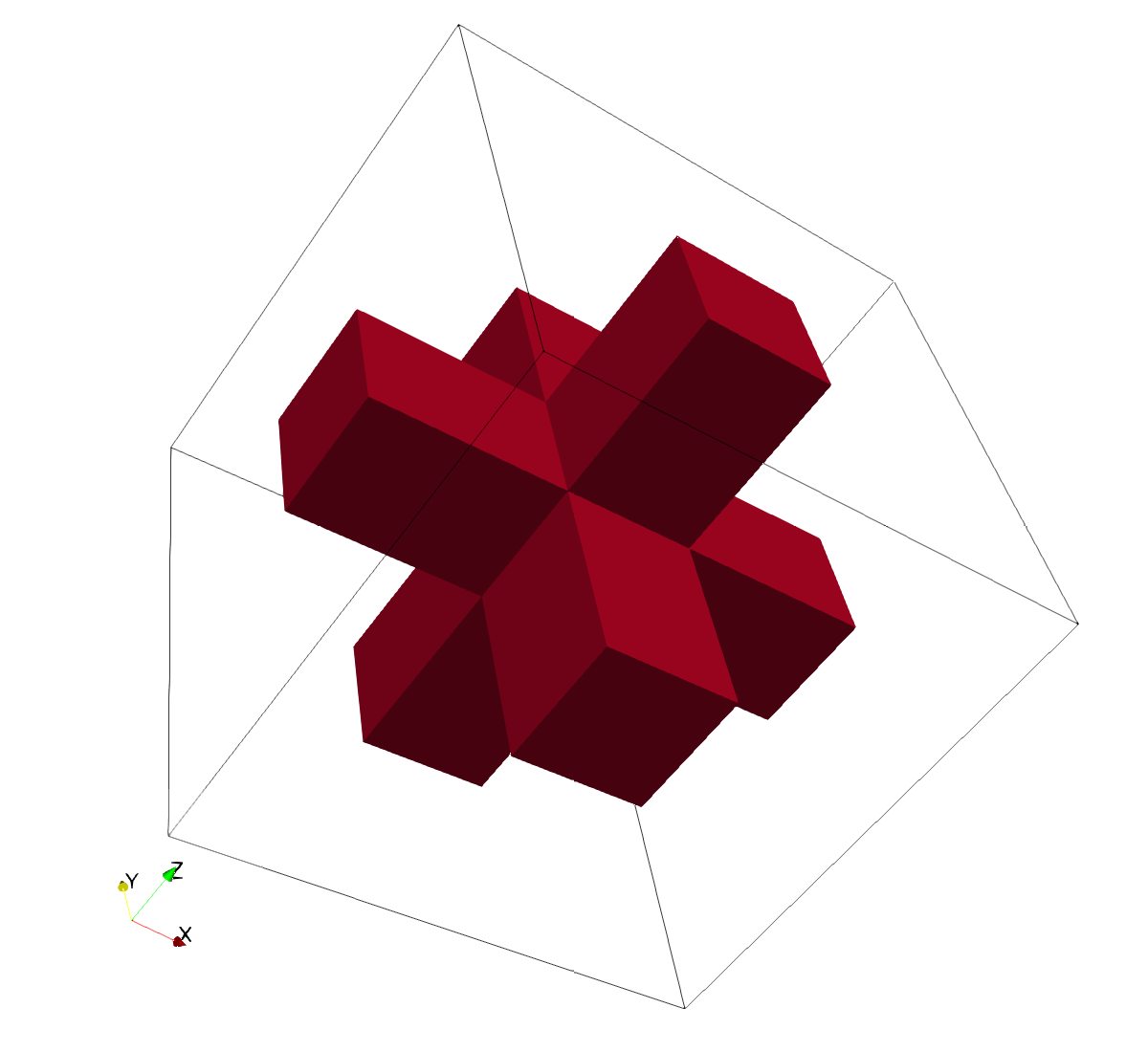}
    \caption{}\label{fig:channels-cell}
  \end{subfigure}
  \begin{subfigure}[b]{0.33\textwidth}
    \includegraphics[width=\textwidth]{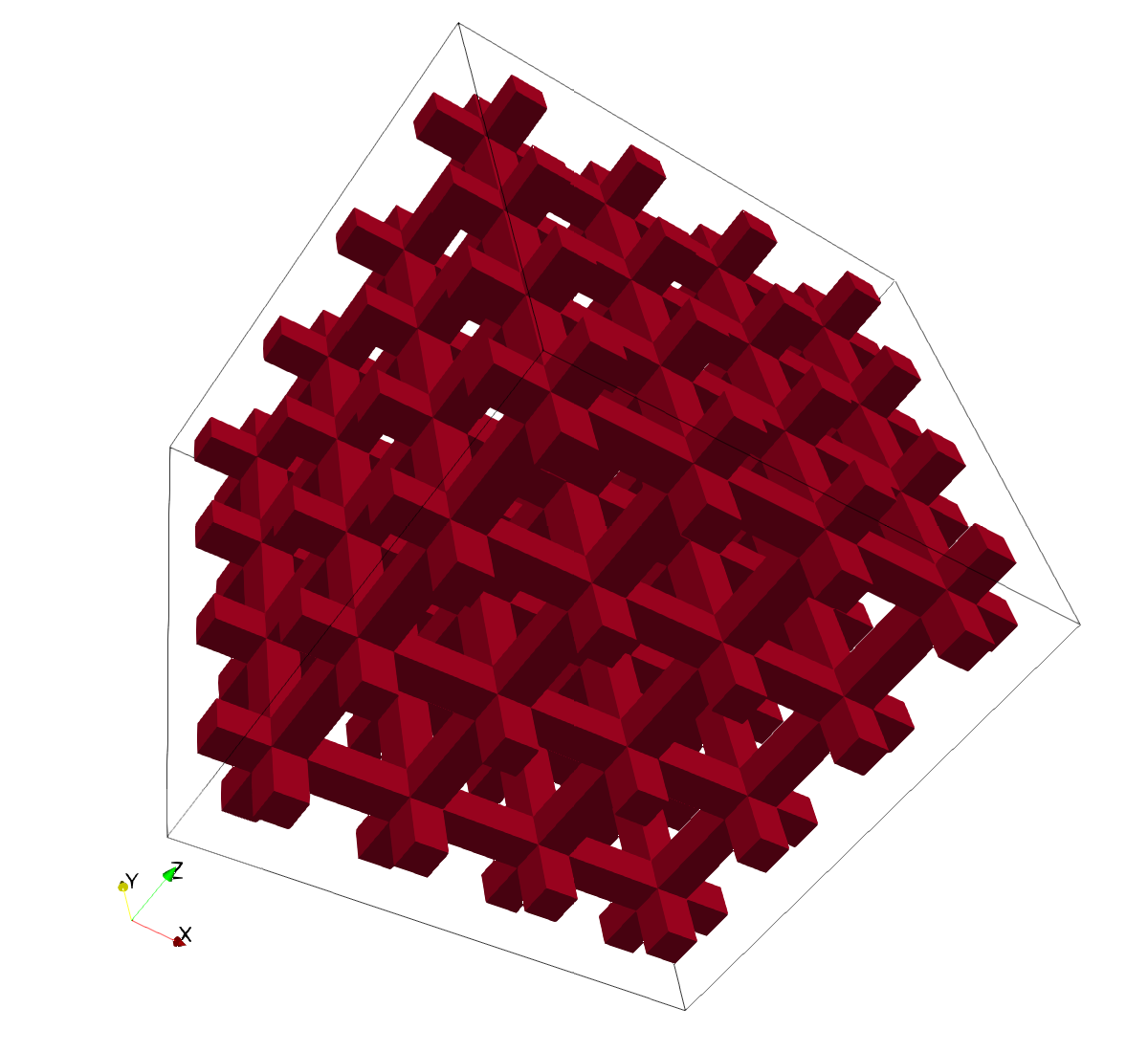}
    \caption{}\label{fig:channels-domain}
  \end{subfigure}
  \begin{subfigure}[b]{0.8\textwidth}
    \includegraphics[width=\textwidth]{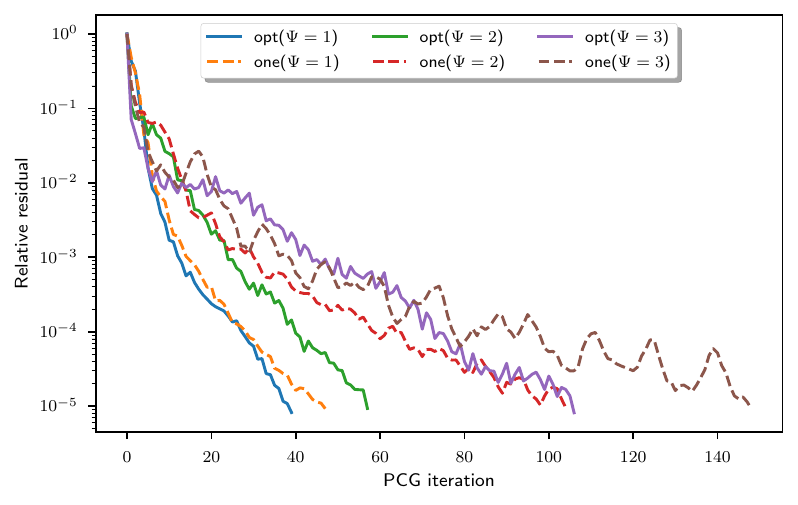}
    \caption{}\label{fig:channels-history}
  \end{subfigure}
  \caption{(a) the \replace{periodical}{periodic} cell, three channels along $x$-, $y$- and $z$-direction are colored in red; (b) a demonstration of an RVE domain composing of $4\times4\times4$ \replace{periodical}{periodic} cells; (c) convergence histories corresponding to different reference parameters under various levels of anisotropicity.}
\end{figure}

The use of single-precision floating points in machine learning tasks is pervasive, primarily due to their advantages in improved memory and computational efficiency, as well as tolerance to noise. Notably, hardware vendors like Nvidia, recognizing the community's preference for single-precision, have invested significant efforts in optimizing the performance of single-precision functions. In homogenization scenarios, where the focus is often on effective coefficients that represent averaged quantities, the choice of precision formats may seem less critical. To validate this proposal, we conduct the second experiment using config-(a) in \cref{fig:ball packed config} to generate coefficient profiles with a resolution of $\mathtt{dof}=512^3$. The results are presented in \cref{tab:single double}, where double precision with a tolerance of $\mathtt{rtol}=\num{1.0e-9}$ is taken as the baseline. We can see that, the differences by employing single precision are all less than $\num{1.0e-2}$, which are generally considered acceptable in engineering applications. It is worth noting that the differences corresponding to high contrast ratios ($\kappa^\mathup{inc}=0.01$ and $\kappa^\mathup{inc}=100$) are larger than those for low contrast ratios ($\kappa^\mathup{inc}=0.1$ and $\kappa^\mathup{inc}=10$). Furthermore, reading from \cref{tab:single double}, we observe that if the single-precision format is adopted, there is an inherent error that cannot be eliminated even with stricter convergence criteria.
One apparent advantage of utilizing single precision is the reduction in memory usage by half, enabling the handling of large RVE models on GPUs, where limited memory is often a major constraint. We are also interested in quantifying the actual computing time savings achieved by single precision. The results are presented in \cref{fig:single vs double}, where the records with a tolerance of $\mathtt{rtol}=\num{1.0e-5}$ using double precision as baselines. It can be observed that under the same tolerance, using single precision can save approximately $40\%$ of the computing time. Balancing accuracy and efficiency hence becomes a major concern, as evident from the comparison with \cref{tab:single double}. Our studies provide an initial understanding of the influence of different floating-point formats on homogenization \cite{Higham2022}, while a rule of thumb should be examined by the specifics of real-world applications.

\begin{table}[!ht]
  \footnotesize
  \caption{Calculated effective coefficients for config-(a) with different $\mathtt{rtol}$, where ``(d)'' and ``(s)'' in the column of $\mathtt{rtol}$ represent using double and single precision respectively, the bold type value represents the baseline and the rest values means relative differences to the baseline in the same column.}\label{tab:single double}
  \centering
  \makegapedcells
  \begin{tabular}{c c c c c c c c c}
    \hline
    \multirow{2}{*}{$\mathtt{rtol}$} & \multicolumn{2}{c}{$\kappa^\mathup{inc}=0.01$} & \multicolumn{2}{c}{$\kappa^\mathup{inc}=0.1$} & \multicolumn{2}{c}{$\kappa^\mathup{inc}=10$} & \multicolumn{2}{c}{$\kappa^\mathup{inc}=100$}                                                                                         \\
    \cline{2-9}
                                     & $\kappa_\mathup{eff}^z$                        & $\mathtt{iter}$                               & $\kappa_\mathup{eff}^z$                      & $\mathtt{iter}$                               & $\kappa_\mathup{eff}^z$ & $\mathtt{iter}$ & $\kappa_\mathup{eff}^z$ & $\mathtt{iter}$ \\
    \hline
    \num{1.0e-9}(d)                  & \textbf{\num{0.91057}}                         & \num{64}                                      & \textbf{\num{0.922601}}                      & \num{25}                                      & \textbf{\num{1.14525}}  & \num{29}        & \textbf{\num{1.19816}}  & \num{78}        \\
    \hline
    \num{1.0e-5}(s)                  & \num{-1.56e-03}                                & \num{24}                                      & \num{1.15e-04}                               & \num{12}                                      & \num{1.00e-04}          & \num{16}        & \num{-5.49e-03}         & \num{37}        \\
    \num{1.0e-6}(s)                  & \num{3.90e-05}                                 & \num{34}                                      & \num{5.40e-05}                               & \num{15}                                      & \num{5.00e-05}          & \num{19}        & \num{-3.08e-03}         & \num{48}        \\
    \num{1.0e-7}(s)                  & \num{1.14e-04}                                 & \num{45}                                      & \num{8.80e-05}                               & \num{19}                                      & \num{7.00e-05}          & \num{23}        & \num{-3.10e-03}         & \num{58}        \\
    \num{1.0e-8}(s)                  & \num{1.14e-04}                                 & \num{55}                                      & \num{8.90e-05}                               & \num{22}                                      & \num{7.00e-05}          & \num{26}        & \num{-3.10e-03}         & \num{71}        \\
    \num{1.0e-9}(s)                  & \num{1.14e-04}                                 & \num{64}                                      & \num{8.90e-05}                               & \num{25}                                      & \num{7.00e-05}          & \num{29}        & \num{-3.10e-03}         & \num{78}        \\
    \hline
    \num{1.0e-5}(d)                  & \num{1.00e-06}                                 & \num{24}                                      & \num{0.00e+00}                               & \num{12}                                      & \num{0.00e+00}          & \num{16}        & \num{1.00e-05}          & \num{37}        \\
    \hline
  \end{tabular}
\end{table}

\begin{figure}[!ht]
  \centering
  \includegraphics[width=\textwidth]{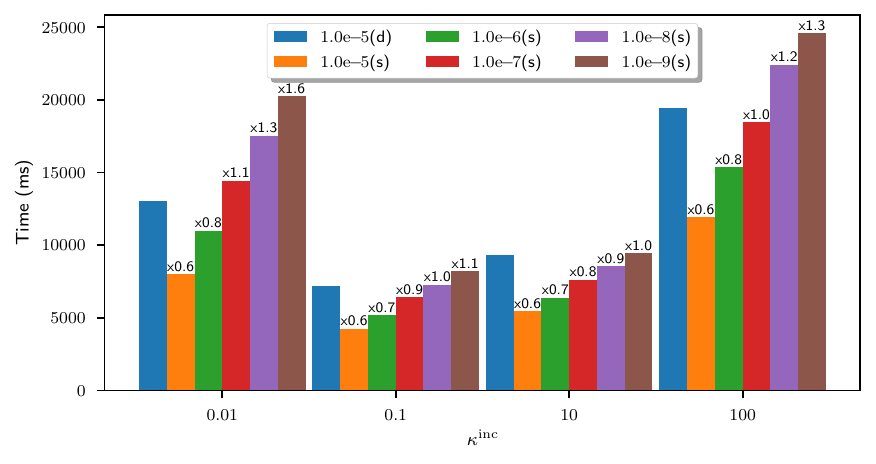}
  \caption{The total elapsed time to obtain the PCG solution by adopting single and double precision for config-(a), where records from $\mathtt{rtol}=\num{1.0e-5}$ with double precision serve as baselines, ``(d)'' and ``(s)'' represent using double and single precision respectively.} \label{fig:single vs double}
\end{figure}

\section{Conclusions}\label{sec:conclusions}
In a post-Moore era, new architectures, technologies, and concepts flood. However, how to fully utilize the surging computing power challenges algorithm designers.  A fundamental consensus should be that if a method mainly relies on common modules provided by specialists who are responsible for the optimized efficiency, the computing performance is more compelling. Nevertheless, such a method may sacrifice generality and become more tailored to specific problems. The proposed method in this paper is a practice of this thinking. We focus on a specific problem\alert{---predicting ETC---}and design an algorithm \replace{closes}{close} to ``vanilla'' functionalities offered by hardware vendors. In particular, we successfully implement the proposed method on the CUDA platform and achieve a $5$-fold acceleration over the pure CPU platform.

The proposed method conceptually consists of two components: discretization and solver. To account for the ETC calculation that the thermal flux should be resolved, we introduce the TPFA scheme to discretize the model PDE. To solve the resulting algebraic linear system, we adopt PCG containing a novel preconditioner to accelerate the convergence. The construction of the preconditioner originates from FFT-based homogenization, which replaces heterogeneous coefficient profiles with a set of homogeneous reference parameters. To enhance the performance of anisotropic models, we design a linear programming technique to determine the reference parameters, which is validated to be effective through numerical experiments. We observe that the preconditioner system could be solved efficiently using multiple FCTs and parallel tridiagonal matrix solvers. A theoretical by-product that provides a two-side estimate for the condition number of the original algebraic linear system is obtained by this observation. Considering that multiple FCTs are not out-of-the-box functionalities on some platforms, we also detail a memory-optimal implementation of FCTs from FFTs. To showcase the efficiency of our proposed method while considering GPU memory limitations, we conduct numerical experiments for 3D RVEs with DoF up to $512^3$. The results indicate that our method remains stable w.r.t.~the number of DoF and heterogeneity of RVEs, which is generally unachievable for standard preconditioners. We address that our method cannot eliminate performance deterioration caused by high contrast ratios. Although there are preconditioners specifically developed to address this issue and claim to be contrast-stable, our method still prevails in terms of its simple implementation and high efficiency.

Nowadays, machine learning-related methods have exhibited a dominant superiority in image processing tasks. Based on pixel/voxel representations of RVEs, it seems to be natural to apply machine learning to homogenization, especially \replace{especially dealing}{when dealing with} nonlinear material laws. The success of machine learning cannot leave the great computing capability powered by GPUs.
\alert{Therefore, our emphasis on the CUDA implementation of our method aims to facilitate its seamless integration with AI.}
Meanwhile, several topics that are important to data-driven frameworks, for example, the influence of floating point precision on homogenization discussed primarily in the article, may attract attention in the future.

\section*{Acknowledgments}
CY owings his special thanks to Dr.~Kwai Lam Wong for his brilliant short course ``From Parallel Computing to Deep Learning'' and his preaching ``GPU is the future'', which \replace{motive}{motived} CY to shape the paper at hand.
SF's research is supported by startup funding from the Eastern Institute of Technology, Ningbo, and NSFC (Project number: 12301514).
EC's research is partially supported by the Hong Kong RGC General Research Fund (Project numbers: 14305222 and 14304021).
\add{They also wish to thank the anonymous referees for their thoughtful comments, which helped in improvement of the presentation.}

\bibliographystyle{siamplain}
\bibliography{refs}
\end{document}